\definecolor{myurlcolor}{rgb}{0,0,0.4}
\definecolor{mycitecolor}{rgb}{0,0.5,0}
\definecolor{myrefcolor}{rgb}{0.5,0,0}
\definecolor{ggray}{gray}{0.25}
 \definecolor{rred}{rgb}{0.5,0,0}
\definecolor{darkgreen}{rgb}{0.0, 0.6, 0.0}
\numberwithin{equation}{section}
\newtheorem{Theorem}{Theorem}[section]
\newtheorem*{Theorem*}{Theorem}
\newtheorem*{TheoremA*}{Theorem~A}
\newtheorem*{TheoremB*}{Theorem~B}
\newtheorem{Corollary}[Theorem]{Corollary}
\newtheorem{Lemma}[Theorem]{Lemma}
\newtheorem{Proposition}[Theorem]{Proposition}
 { \theoremstyle{definition}
\newtheorem{Definition}[Theorem]{Definition}

\newtheorem{Remark}[Theorem]{Remark} }
\newcommand{\be}{\begin{equation}}
\newcommand{\ee}{\end{equation}}
\newcommand{\bea}{\begin{eqnarray}}
\newcommand{\eea}{\end{eqnarray}}
\newcommand{\lra}{\longrightarrow}
\newcommand{\hh}{\mathcal{H}}
\newcommand{\Aut}{\mathcal{A}{\rm ut}}
\newcommand{\Der}{\mathcal{D}{\rm er}}
\newcommand{\ocal}{\mathcal{O}}
\newcommand{\dcal}{\mathcal{D}}
\newcommand{\Jcal}{\mathcal{J}}
\newcommand{\appa}{\mathscr{A}}
\newcommand{\appas}{\mathscr{A}_{\rm sa}}
\newcommand{\stav}{\mathscr{V}}
\newcommand{\m}{\mathfrak{m}}
\newcommand{\G}{\mathrm{G}}
\newcommand{\R}{{\mathbb R}}
\newcommand{\C}{{\mathbb C}}
\newcommand{\N}{{\mathbb N}}
\newcommand{\bH}{{\mathbb H}}
\newcommand{\K}{{\mathbb K}}
\newcommand{\Gl}{{\mathrm{Gl}}}
\newcommand{\Jspin}{{\mathcal{J}\mathrm{Spin}}}
\newcommand{\g}{{\mathfrak{g}}}
\renewcommand{\k}{{\mathfrak k}}
\newcommand{\cf}{{\mathfrak c}}
\newcommand{\h}{{\mathfrak h}}
\renewcommand{\m}{{\mathfrak m}}
\newcommand{\gl}{{\mathfrak{gl}}}
\newcommand{\cR}{\mathcal{R}}
\newcommand{\cA}{\mathcal{A}}
\newcommand{\Gcal}{\mathcal{G}}
\newcommand{\sspan}{{\mathrm{span}}}
\begin{document}
\allowdisplaybreaks

\renewcommand{\thefootnote}{}

\newcommand{\arXivNumber}{2112.09781}

\renewcommand{\PaperNumber}{078}

\FirstPageHeading

\ShortArticleName{Information Geometry, Jordan Algebras, and a Coadjoint Orbit-Like Construction}

\ArticleName{Information Geometry, Jordan Algebras, \\ and a Coadjoint Orbit-Like Construction\footnote{This paper is a~contribution to the Special Issue on Differential Geometry Inspired by Mathematical Physics in honor of Jean-Pierre Bourguignon for his 75th birthday. The~full collection is available at \href{https://www.emis.de/journals/SIGMA/Bourguignon.html}{https://www.emis.de/journals/SIGMA/Bourguignon.html}}}

\Author{Florio M. CIAGLIA~$^{\rm a}$, J\"urgen JOST~$^{\rm bcd}$ and Lorenz J. SCHWACHH\"OFER~$^{\rm e}$}

\AuthorNameForHeading{F.M.~Ciaglia, J.~Jost and L.J.~Schwachh\"ofer}

\Address{$^{\rm a)}$~Department of Mathematics, Universidad Carlos III de Madrid, Legan\'es, Madrid, Spain}
\EmailD{\href{mailto:ciaglia@math.uc3m.es}{ciaglia@math.uc3m.es}}
\URLaddressD{\url{https://researchportal.uc3m.es/display/inv48159}}

\Address{$^{\rm b)}$~Max Planck Institute for Mathematics in the Sciences, Leipzig, Germany}
\EmailD{\href{mailto:jjost@mis.mpg.de}{jjost@mis.mpg.de}}

\Address{$^{\rm c)}$~Center for Scalable Dynamical Systems, Leipzig University, Germany}
\URLaddressD{\url{https://www.mis.mpg.de/jjost/juergen-jost.html}}

\Address{$^{\rm d)}$~Santa Fe Institute for the Sciences of Complexity, New Mexico, USA}

\Address{$^{\rm e)}$~Department of Mathematics, TU Dortmund University, Dortmund, Germany}
\EmailD{\href{mailto:lschwach@math.tu-dortmund.de}{lschwach@math.tu-dortmund.de}}
\URLaddressD{\url{http://riemann.mathematik.tu-dortmund.de/~lschwach/}}

\ArticleDates{Received April 12, 2023, in final form October 09, 2023; Published online October 20, 2023}

\Abstract{Jordan algebras arise naturally in (quantum) information geometry, and we want to understand their role and their structure within that framework. Inspired by Kirillov's discussion of the symplectic structure on coadjoint orbits, we provide a similar construction in the case of real Jordan algebras. Given a real, finite-dimensional, formally real Jordan algebra $\Jcal$, we exploit the generalized distribution determined by the Jordan product on the dual $\Jcal^{\star}$ to induce a pseudo-Riemannian metric tensor on the leaves of the distribution. In particular, these leaves are the orbits of a Lie group, which is the structure group of $\Jcal$, in clear analogy with what happens for coadjoint orbits. However, this time in contrast with the Lie-algebraic case, we prove that not all points in $\Jcal^{*}$ lie on a leaf of the canonical Jordan distribution. When the leaves are contained in the cone of positive linear functionals on $\Jcal$, the pseudo-Riemannian structure becomes Riemannian and, for appropriate choices of $\Jcal$, it coincides with the Fisher--Rao metric on non-normalized probability distributions on a~finite sample space, or with the Bures--Helstrom metric for non-normalized, faithful quantum states of a finite-level quantum system, thus showing a direct link between the mathematics of Jordan algebras and both classical and quantum information geometry.}

\Keywords{information geometry; Jordan algebras; Lie algebras; Kirillov orbit method; Fisher--Rao metric; Bures--Helstrom metric}

\Classification{17C20; 17C27; 17B60; 53B12}

\begin{flushright}
\begin{minipage}{65mm}
 {\em Dedicated to Jean-Pierre Bourguignon}\\
{\em on the occasion of his 75th birthday}
\end{minipage}
\end{flushright}

\noindent
{\bf Personal note of J\"urgen Jost:}\\
I have known Jean-Pierre Bourguignon for many decades, as a dedicated and inspiring differential geometer, as a generous friend, and as an efficient and competent, and above all, always fair, administrator and leader. I am therefore happy that we can dedicate this paper to him on the occasion of his 75th birthday, with great respect for all his achievements.

\renewcommand{\thefootnote}{\arabic{footnote}}
\setcounter{footnote}{0}

\section{Introduction}\label{sec: introduction}

Since their introduction in \cite{J-vN-W-1934} in the context of the foundations of quantum mechanics, Jordan algebras proved to be extremely versatile in both mathematics and physics.
For instance, we mention the link between Jordan algebras, symmetric and harmonic analysis \cite{Chu-2012,Chu-2017,F-K-1994,Koecher-1999,Upmeier-1985}, the connection between Jordan algebras and quantum theories \cite{Baez-2022,F-F-M-P-2014,F-F-I-M-2013}, and the role Jordan algebras play in the reconstruction of quantum theories \cite{Niestegge-2020,W-vdW-2022}.
We also refer to \cite{Iordanescu-2011} for an extensive discussion of different fields of application of Jordan algebras in both mathematics and physics.

Motivated by classical and quantum information geometry, we want to present here another point of view from which to explore Jordan algebras and their mathematics.

Information geometry is a multidisciplinary field of research in which different aspects of mathematics, physics, statistics, and information theory coexist and mutually influence each other.
From a mathematical point of view, information geometry explores the mathematical structures living on suitable manifolds of classical probability distributions or quantum states, known as parametric models, and their relation with information-theoretic and statistical tasks~\cite{Amari-2016,A-N-2000}.
In particular, we have the fundamental concept of an information metric (classical or quantum), a~Riemannian metric tensor on a~manifold $M$ that parameterizes classical probability distributions or quantum states.
The first example of such an information metric is the so-called Fisher--Rao metric tensor, whose introduction in the classical context traces back to the work of Fisher \cite{Fisher-1922}, Mahalanobis \cite{Mahalanobis-1936}, and Rao \cite{Rao-1945}.
The appearance of the Fisher--Rao metric tensor in different applied contexts like population genetics and statistical inference is partially explained by Cencov's theorem \cite{Cencov-1982} which states that, for finite outcome spaces, the Fisher--Rao metric tensor is the only metric tensor which is invariant with respect to the class of Markov kernels which are the most general type of maps respecting the convex geometry of the space of probability distributions.
Cencov's theorem has recently been extended to continuous outcome spaces \cite{A-J-L-S-2015,A-J-L-S-2017,B-B-M-2016,Fujiwara-2023}.

In the quantum case, the situation is not so simple. The first type of quantum information metric appeared in the context of parameter estimation and quantum metrology already at the end of the sixties in the work of Helstrom \cite{Helstrom-1967, Helstrom-1968, Helstrom-1969}.
This Riemannian metric tensor is known as the Bures--Helstrom because it was proved by Uhlmann~\cite{Uhlmann-1976,Uhlmann-1992} that it is the ``infinitesimal'' version of the distance function among states of a von Neumann algebra introduced by Bures~\cite{Bures-1969} to generalize a result by Kakutani \cite{Kakutani-1948}.
Later, it has been proved by Petz \cite{Petz-1996} -- building on previous works by Cencov and Morozowa \cite{C-M-1991} -- that uniqueness is lost in the finite-dimensional quantum case, that is, there is no analogue of Cencov's theorem mentioned above.
This led to an intensive study of the so-called monotone quantum metric tensors generalizing the Fisher--Rao metric tensor to the quantum case \cite{C-DC-L-M-M-V-V-2018,F-K-M-M-S-V-2010,Hasegawa-1995,H-P-1997,L-R-1999,M-M-V-V-2017} and of their information-theoretic and statistical applications \cite{L-Y-L-W-2020,Paris-2009,S-Y-H-2020,T-A-2014,Wootters-1981}.
However, the Bures--Helstrom metric tensor remains the ``best one'' when it comes to quantum metrology because it leads to the tightest version of the quantum Cramer--Rao bound \cite{Fuchs-1996,Paris-2009}.

A common framework to deal with classical and quantum information geometry simultaneously is provided by $W^*$-algebras \cite{C-J-S-2020-02,C-J-S-2020,C-DN-J-S-2023,Jencova-2003,Jencova-2006,Kostecki-2011}.
In this context, it has been recently argued that the Fisher--Rao metric tensor and the Bures--Helstrom metric tensor are related through the Jordan algebra associated with a $W^{*}$-algebra \cite{C-J-S-2020-02,C-J-S-2020, C-DN-J-S-2023}.
These types of Jordan algebras allow for a unification of the classical and quantum states in terms of normal states (i.e., the normalized and normal positive linear functionals) on the algebra.
When the Jordan algebra is the associative Jordan algebra of self-adjoint elements in the commutative $W^{*}$-algebra $\mathcal{L}^{\infty}(\mathcal{X},\mu)$ of complex-valued, $\mu$-essentially bounded (equivalence classes of) functions on the measurable space $(\mathcal{X},\Sigma)$, the normal states are probability measures on $\mathcal{X}$ which are absolutely continuous with respect to $\mu$, and we recover the classical case.
The quantum case is recovered considering the Jordan algebra of self-adjoint elements in the $W^{*}$-algebra $\mathcal{B}(\mathcal{H})$ of bounded linear operators on the complex Hilbert space $\mathcal{H}$, so that the normal states are density operators on $\mathcal{H}$ (i.e., trace-class, positive semidefinite operators with unit trace) which identify quantum states.
In this context, the Jordan product induces a Riemannian metric tensor on suitable parametric models of normal states, which becomes the Fisher--Rao metric tensor when the algebra is $\mathcal{L}^{\infty}(\mathcal{X},\mu)$ and the Bures--Helstrom metric tensor when the algebra is $\mathcal{B}(\mathcal{H})$.

In the finite-dimensional case, it has been observed that the metric tensor on normal states may be obtained as a kind of inverse of a contravariant tensor on the dual space of the Jordan algebra determined by the Jordan product itself \cite{C-J-S-2020}.

Let us recall that setting so that we can subsequently explain how to abstract and generalize it in order to develop new insight into classical and quantum information geometry and at the same time into the structure of Jordan algebras from that of Lie algebras operating on them.
Thus, we let $\appa$ be a finite-dimensional, unital $C^{*}$-algebra, and let $\appas$ be the self-adjoint part of~$\appa$ and $\stav$ be the self-adjoint part of the Banach dual $\appa^{*}$ of $\appa$.
 Of course, there is some duality here, as $\mathbf{a}\in\appas$ can be identified with a real-valued, linear function $l_{\mathbf{a}}$ on $\stav$, via
\[
l_{\mathbf{a}}(\xi) := \xi(\mathbf{a}).
\]
Since $\stav$ is a finite-dimensional Banach space, $\mathbf{a}\mapsto l_{\mathbf{a}}$ yields an isomorphism between $\appas$ and~$\stav^{*}=\appas^{**}$. In other words, the differentials of the linear functions on $\stav$ associated with elements in $\appas$ generate the cotangent space $T_{\xi}^{*}\stav$ at each $\xi$.

The associative product of $\appa$ leads to a commutative product $\{,\}$ and to a non-commutative product $[[\,,\,]]$ on $\appas$, making $\appa_{\rm sa}$ into a \emph{Banach--Lie--Jordan algebra} \cite{A-S-2001}; they are
\begin{gather*}
\{\mathbf{a},\mathbf{b}\} := \frac{1}{2} (\mathbf{ab} + \mathbf{ba}), \qquad
[[\mathbf{a},\mathbf{b}]] := \frac{1}{2\sqrt{-1}} (\mathbf{ab} - \mathbf{ba}).
\end{gather*}
We can then exploit these products to introduce a symmetric and an antisymmetric tensor on the dual of $\appas$.
Both these tensors play a role in the context of the geometry of finite-level quantum systems and their open quantum dynamical evolutions \cite{C-C-I-M-V-2019, C-DC-I-L-M-2017}.

Specifically, we define
\[
\left(\mathcal{R}(\mathrm{d}l_{\mathbf{a}},\mathrm{d}l_{\mathbf{b}})\right)(\xi) := l_{\{\mathbf{a},\mathbf{b}\}}(\xi) = \xi(\{\mathbf{a},\mathbf{b}\}),
\]
and, of course, extend it by linearity to the entire cotangent space, obtaining a symmetric contravariant tensor associated with the Jordan product.

In the same way that the Jordan structure induces the symmetric tensor $\mathcal{R}$, the Lie structure also induces an antisymmetric tensor
\[
\left(\Lambda(\mathrm{d}f_{1},\mathrm{d}f_{2})\right)(\xi) := \xi\left([[\mathrm{d}f_{1}(\xi), \mathrm{d}f_{2}(\xi)]]\right).
\]
Since the Lie algebra of the group $\mathscr{U}$ of unitary elements in $\appa$ may be identified with the space~$\appas$ of self-adjoint elements in $\appa$, the tensor $\Lambda$ may be interpreted as the Kostant--Kirillov--Souriau Poisson tensor associated with the coadjoint action of the unitary group $\mathscr{U}$.
Importantly, for our purposes, also $\mathcal{R}$ can be studied with the help of the action of this Lie algebra.

Returning to the covariant symmetric tensor $\mathcal{R}$, it happens that, on certain orbits of the Banach--Lie group of invertible elements of $\appa$ (of which $\mathscr{U}$ is a subgroup), it makes sense to consider the covariant tensor $\G := \mathcal{R}^{-1}$, which becomes a~Riemannian metric tensor when the above-mentioned orbits lie in the space of positive linear functionals on $\appa$. The result is a~Riemannian metric tensor, which is precisely that used in classical and quantum information geometry.

The observation in \cite{C-J-S-2020} that we have just explained only hinted at the possibility of obtaining Riemannian metric tensors with a coadjoint orbit-like procedure, and the main purpose of this work is to systematically explore this possibility in the more general context of finite-dimensional, formally real Jordan algebras.

In fact, the action of the Lie algebra of $\mathscr{U}$ is reminiscent of what happens in the case of Lie groups and Lie algebras, where the Lie algebra induces a contravariant tensor on its dual that can be inverted on the so-called coadjoint orbits in order to equip them with a symplectic structure known as the Kirillov--Kostant--Souriau (KKS) symplectic form \cite{Kirillov-1962,Kirillov-1976,Kostant-1970,Souriau-1970}.


Therefore, in this paper, we develop a more abstract perspective, moving from the Jordan algebras arising from $C^{*}$-algebras to general Jordan algebras. In particular, this will clarify how much of the above construction actually depends on the ambient $C^{*}$-algebras.
In fact, we shall find that, at least when we deal with finite-dimensional formally real Jordan algebras, the ambient $C^{*}$-algebras play no role.
On one hand, this will provide a deeper explanation of the Fisher--Rao and Bures--Helstrom tensors and their generalizations. On the other hand, from
 this more general perspective, we obtain Riemannian metric tensors on suitable models of positive linear functionals on Jordan algebras that do not necessarily arise in the context of classical and quantum information geometry as the spin factors that are recently being employed in a new formulation of color perception theory \cite{Berthier-2020,B-P-P-2022,B-P-2022,Provenzi-2020,Resnikoff-1974}.

As anticipated above, our guiding question is to what extent we can develop a coadjoint orbit-like construction for Jordan algebras that is analogous to that of Lie algebras, at least in finite dimensions.
Specifically, we start from Kirillov's fundamental observation that a coadjoint orbit $\ocal \subset \g^\star$ of the Lie group $\G$ carries a natural homogeneous symplectic structure \cite{Kirillov-1962,Kirillov-1976}.
Here, $\g$ and $\g^\star$ denote the Lie algebra of $\G$ and its dual.
Kirillov's observation relates algebraic structures to differential geometry and mathematical physics in a deep and very productive way, led to spectacular results in representation theory, classical and quantum mechanics, and it is closely related to geometric quantization \cite{Kirillov-2001}.

As far as we know, no analogue of the coadjoint orbit construction for Lie algebras has been investigated in the case of Jordan algebras, and the main purpose of this work is precisely to fill this gap.
To understand the picture from a more abstract perspective, we need to go somewhat beyond the setting sketched above. We shall start by considering a general, finite-dimensional algebra $\cA$, i.e., a finite-dimensional (real or complex) vector space with a bilinear product $\bullet\colon \cA \times \cA \to \cA$.
At this moment, in contrast to the setting above, no further conditions like associativity or identities of Jacobi/Jordan type are assumed.
We denote with $\cA^\star$ the dual space of $\cA$, and the corresponding pairing is denoted by $\langle \cdot,\cdot \rangle$.
Due to the finite-dimensionality, we have the identification $\cA^{\star\star} \cong \cA$, and
for the tangent and cotangent spaces of $\cA^\star$, we then have $T_\xi
\cA^\star \cong \cA^\star$ and $T^\ast_\xi \cA^\star \cong \cA^{\star\star}
\cong \cA$. We may then represent the product $\bullet$
via
\begin{equation}
 \label{01}
 \langle \xi, a \bullet b\rangle \qquad \text{for }\xi
 \in \cA^\star, \ a, b \in \cA.
\end{equation}
Furthermore, this induces a multiplication on $C^\infty(\cA^\star)$ via
\be \label{02}
f \bullet g(\xi) :=
\langle \xi, d_\xi f
\bullet d_\xi g \rangle,
\ee
and for $f\in C^\infty(\cA^\star)$, we may define its {\em $\cA$-dual vector field of $f$} as
\[
\big(\nabla^{\cA}(f)\big)_\xi(g) :=\{f, g\}_\cA(\xi).
\]
Note that, in general, this is not a gradient because $\bullet$ need not be symmetric, but we use the symbol $\nabla$ because it satisfies many of the formal identities of gradients.

The multiplication in equation \eqref{02} is of course compatible with the product $\bullet$ in the sense that, identifying $a\in \cA$ with the linear functional $ f_a(\xi) := \langle \xi, a\rangle$, the inclusion $\cA \hookrightarrow C^\infty(\cA^\star)$ is an algebra monomomorphism
\[
f_a \bullet f_b = f_{a \bullet b},
\]
justifying the symbol $\bullet$ to denote the multiplication on both $\cA$ and $C^\infty(\cA^\star) \supset \cA$. The automorphisms of $\cA$, i.e., the linear isomorphisms $g\colon \cA \to \cA$ with $g(a \bullet b) = (ga) \bullet (gb)$, form a~Lie group.
The Lie
algebra of that group consists of the derivations, i.e., the linear maps~$d \in \gl(\cA)$ with $d(a \bullet b) = (da) \bullet b + a \bullet
(db)$.
Moreover, we have the structure Lie group $\G(\cA)$ whose Lie algebra is generated by left multiplications, i.e., by the maps $l_a\colon (b \mapsto a \bullet b) \in \gl(\cA)$.

Therefore, even though we do not assume $\cA$ to be a Lie algebra, there is a Lie algebra that is naturally associated to $\cA$, and we may hope to use its theory to gain insight about $\cA$ itself.
This works to some extent, but a problem arises from the fact that the {\em $\cA$-dual distribution} $\hh^\cA$ on $\cA^\star$ defined by
\[
\hh^\cA_\xi := \big\{ \big(\nabla^\cA(f)\big)_\xi \mid f \in C^\infty(\cA^\star)\big\} 
\subset T_\xi \cA^\star.
\]
is in general not integrable.
We recall that, in the Lie algebra case, $\hh^\cA$ is integrable, and its leaves are precisely the coadjoint orbits which carry a symplectic structure induced by equation~\eqref{02}.
Because $\hh^\cA$ is not integrable for general algebras, we cannot expect the same level of generality as in Lie algebras.

However, the results become stronger if we also assume some additional structures on $\cA$.
Associativity already gives us some leverage, but the more specific case that we are interested in here is when $\cA$ is a Jordan algebra.
Our strategy then is to combine this Jordan structure, and the identities resulting from it, with the Lie algebra structure that we just have identified.

Thus, we consider a finite-dimensional real Jordan algebra $\cA = \Jcal$.
In this case, we can also define an extended structure Lie algebra $\hat \g(\Jcal)$ that maps surjectively onto $\g(\Jcal)$ \cite{Bertram-2000,Koecher-1999,L-L-2022}.
This algebra is the direct sum $\Der_0(\Jcal) \oplus \Jcal$ of the inner derivations (those generated by left multiplications $l_x$ with algebra elements $x$) and $\Jcal$ itself.
The Lie bracket on $\Der_0(\Jcal)$ is the commutator, while, for $x,y \in \Jcal$, it is $[x,y] := [l_x, l_y] \in \Der_0(\Jcal)$, and, for $d\in \Der_0(\Jcal)$ and $x \in \Jcal$, it is $[d, x] = - [x, d] := d(x) \in
\Jcal$.
Therefore, putting $\k =\Der_0(\Jcal)$, $\m_{\Jcal}= \Jcal$, we have $[\k, \k] \subset \k$, $[\k, \m_{\Jcal}] \subset \m_{\Jcal}$, $[ \m_{\Jcal}, \m_{\Jcal}] = \k$, i.e., we have a transvective symmetric pair (see Definition~\ref{df:symm}) which provides us with further structure to work with.

The generalized distribution $\hh^\Jcal$ on $\Jcal^\star$ is still not integrable in general, but the bilinear form~$\Gcal_\xi$ induced by equation \eqref{01} on $\hh^\Jcal_\xi$ is symmetric, and therefore it defines a pseudo-Riemannian metric on the $ \m_{\Jcal}$-regular part of each $\G(\Jcal)$-orbit $\ocal^{\rm reg}_{\m_{\Jcal}} \subset \Jcal^\star$ (see Definition~\ref{def:regular}).

On $\Jcal$ there is the symmetric, bilinear form $\tau(x,y) := \operatorname{tr} l_{\{x, y\}}$, which is also associative with respect to the Jordan product, and we can use it to decompose $\Jcal$ \cite[p.~59]{Koecher-1999}.
Specifically, in the case of {\em positive Jordan algebras}, $\tau$ is positive definite so that there is a canonical identification~${\Jcal \cong \Jcal^\star}$, and we can also derive additional properties from the above-mentioned decomposition.
Each~$\xi \in \Jcal^\star$ has a {\em spectral decomposition} associating with $\xi$ its {\em spectral coefficients $(\lambda_i)_{i=1}^r \in \R^r$}, where~$r$ denotes the rank of $\Jcal$. The pair $(n_+, n_-)$ counting the number of positive and negative spectral coefficients is called the {\em spectral signature of $\xi$}. We then show the following:

\begin{TheoremA*}[cf.\ Theorem \ref{thm:orbits}]
If $\Jcal$ is a positive simple real Jordan algebra, then the orbits of the structure group $\G(\Jcal)$ consist of all elements with the same spectral signature.
\end{TheoremA*}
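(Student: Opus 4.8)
The plan is to transport the whole problem from $\Jcal^\star$ to $\Jcal$ by means of the $\tau$-identification $\Jcal \cong \Jcal^\star$, and then to exhibit, inside each fixed-signature set, a common normal form reached by elements of $\G(\Jcal)$. First I would record that the natural action of $g \in \G(\Jcal)$ on $\Jcal^\star$ corresponds under $\tau$ to the action of $(g^\ast)^{-1}$ on $\Jcal$, where $g^\ast$ denotes the $\tau$-adjoint; since the structure group is stable under $g \mapsto (g^\ast)^{-1}$, the family of $\G(\Jcal)$-orbits in $\Jcal^\star$ is carried bijectively onto the family of $\G(\Jcal)$-orbits in $\Jcal$, so it suffices to compute the latter. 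I will use throughout that $\G(\Jcal)$ contains, on the one hand, the inner automorphisms (integrating the inner derivations in $\k=\Der_0(\Jcal)$) and, on the other hand, the quadratic representations $P(a)$ for $a$ in the positive cone, via the Jordan identity $P(\exp x)=\exp(2\, l_x)$.

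For the transitivity direction, take $\xi \in \Jcal^\star$ with spectral decomposition $x = \sum_{i=1}^r \lambda_i c_i \in \Jcal$ relative to a Jordan frame $(c_i)$. The first step uses simplicity: the automorphism group acts transitively on Jordan frames, so an inner automorphism moves $(c_i)$ to a fixed reference frame $(c_i^0)$ and simultaneously reorders the coefficients so that the positive ones come first, then the negative ones, then the zeros. The second step rescales: taking $a = \sum_i \mu_i c_i^0$ with $\mu_i = |\lambda_i|^{-1/2}$ when $\lambda_i \neq 0$ and $\mu_i = 1$ otherwise, one has $a$ in the positive cone and, by the Peirce calculus, $P(a)\, c_i^0 = \mu_i^2\, c_i^0$. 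Hence $P(a)$ carries $\sum_i \lambda_i c_i^0$ to the canonical element $u_{n_+,n_-} := \sum_{i \le n_+} c_i^0 - \sum_{n_+ < i \le n_+ + n_-} c_i^0$, which depends only on the signature $(n_+, n_-)$. Thus any two elements of the same signature lie in the common orbit of $u_{n_+, n_-}$.

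For the converse, that distinct signatures give distinct orbits, I would show that $\G(\Jcal)$ preserves the spectral signature, and since it is generated by the transformations above it suffices to check this on the generators. Inner automorphisms preserve the full Jordan spectrum, hence the signature. For $P(a)$ with $a$ invertible the claim is a law-of-inertia statement, modelled on Sylvester's theorem through the associative case $P(a)x = axa$: one argues that $n_+$ and $n_-$ are locally constant on the invertible elements, whose connected components are indexed precisely by signature, and that $P(a)$, being connected to the identity through a path $P(a_t)$, preserves each such component; the non-invertible case follows by a Peirce/limiting argument. I expect the two genuinely nontrivial inputs, and hence the main obstacles, to be exactly these: the transitivity of $\Aut(\Jcal)$ on Jordan frames, which is where simplicity of $\Jcal$ is indispensable, and the inertia property of $P(a)$; the Peirce computation $P(a)\, c_i^0 = \mu_i^2\, c_i^0$ and the rescaling are then routine.
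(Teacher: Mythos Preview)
Your approach is correct, and the transitivity half is essentially the paper's: both reduce to a fixed Jordan frame via the transitivity of $\Aut_0(\Jcal)$ on frames (Proposition~\ref{prop:collect-positive-Jordan}\,(5), which is exactly where simplicity enters) and then rescale the nonzero spectral coefficients---the paper by acting with $\exp(l_{t_ic_i})$, you by $P(a)$, and these coincide through $P(\exp x)=\exp(2\,l_x)$.

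For the converse, that $\G(\Jcal)$ preserves the signature, the two arguments genuinely diverge. The paper does not check generators. Instead it characterizes the rank-$\le m$ set $\Sigma^{\le m}$ as the common zero locus of the low-order coefficients $a_0,\dots,a_{r-m-1}$ of the generic minimal polynomial, and proves these polynomials vanish along every curve $t\mapsto \exp(tX)\cdot x$ by a real-analyticity argument (vanishing for small $t$ forces vanishing for all $t$); this yields $\G(\Jcal)$-invariance of each $\Sigma^m$. The finer invariance of $\Sigma_{n_+,n_-}$ is then a topological step: each $\Sigma_{n_+,n_-}$ is shown to be relatively closed in $\Sigma^m$ using compactness of $\Aut_0(\Jcal)$, and one concludes by connectedness of orbits. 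Your Sylvester-inertia route via paths $P(a_t)$ is more elementary and avoids the analyticity machinery. The one spot where you are too terse is the non-invertible case: a single limiting sequence $x_\epsilon\to x$ does not by itself tell you which eigenvalues of $P(a)x_\epsilon$ collapse to zero. A clean completion is to approximate $x$ twice---once pushing all zero eigenvalues to $+\epsilon$ (giving $n_-(P(a)x)\le n_-$ in the limit) and once to $-\epsilon$ (giving $n_+(P(a)x)\le n_+$)---and then run the same inequalities for $P(a^{-1})$ applied to $P(a)x$ to reverse them. With that filled in, your argument is complete and arguably more transparent than the paper's.
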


We also characterize the {\em regular }points (i.e., those where the
generalized distribution $\hh^\Jcal$ is integrable) in such a $\G(\Jcal)$-orbit in Theorem \ref{thm:main} and describe the pseudo-Riemannian metric $\Gcal_\xi$ at each regular point $\xi \in \Jcal^\star$ in Proposition \ref{prop:metric-Jordan}.
Specifically, let $\Omega_\Jcal$ denote the {\em cone of squares of $\Jcal$}, i.e., the interior of the set $\big\{x^2 \mid x \in \Jcal\big\}$. Then $\Omega_\Jcal$ is the $\G(\Jcal)$-orbit of the identity ${\bf 1}_\Jcal$.
 The characterizations in Theorem \ref{thm:main}, Proposition \ref{prop:inner-product}, and Proposition \ref{prop:metric-Jordan} lead to the following remarkable description:

\begin{TheoremB*} Let $\Jcal$ be a positive simple real Jordan
 algebra. Then all points of a $\G(\Jcal)$-orbit $\ocal \subset \Jcal^\star$ are $\m_{\Jcal}$-regular iff $\ocal \subset \overline{\Omega}_\Jcal$ or $\ocal \subset -\overline{\Omega}_\Jcal$. The form $\Gcal$ on $\ocal$ is positive definite in the first and negative definite in the second case, thus defining a Riemannian metric on $\ocal$ which is invariant with respect to the action of the automorphism group of $\Jcal$.
\end{TheoremB*}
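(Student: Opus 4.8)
The plan is to translate the statement entirely into the spectral/Peirce picture and then read off both the regularity dichotomy and the definiteness from the eigenvalues of the left-multiplication operator $l_\xi$. First I would use the identification $\Jcal \cong \Jcal^\star$ furnished by the positive-definite associative form $\tau$ to regard $\xi$ as an element of $\Jcal$ and to compute the distribution explicitly: since $\tau$ is associative, each $l_a$ is $\tau$-self-adjoint, so the tangent vector $\big(\nabla^\Jcal(f)\big)_\xi$ corresponds to $l_\xi(d_\xi f)$ and hence
\[
\hh^\Jcal_\xi \;\cong\; \operatorname{Im}(l_\xi) \subset \Jcal .
\]
Fixing a Jordan frame adapted to the spectral decomposition $\xi = \sum_{i=1}^r \lambda_i c_i$, the operator $l_\xi$ preserves the Peirce decomposition and acts on the component $\Jcal_{ij}$ as the scalar $\tfrac12(\lambda_i+\lambda_j)$. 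Thus $\ker(l_\xi)$ is the sum of those Peirce components with $\lambda_i+\lambda_j=0$, and the rank of the distribution is governed entirely by which pairs of spectral coefficients cancel.

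Next I would settle the regularity dichotomy. By Theorem~\ref{thm:orbits} the orbit $\ocal$ consists exactly of the elements with a fixed spectral signature $(n_+,n_-)$, and along $\ocal$ the number $n_0=r-n_+-n_-$ of vanishing coefficients is constant. If $n_-=0$ (so $\ocal\subset\overline\Omega_\Jcal$), then $\lambda_i+\lambda_j=0$ with $(i,j)$ not both null is impossible, since all nonzero coefficients are positive; hence the only cancelling pairs are the null--null ones, $\ker(l_\xi)$ has constant dimension along $\ocal$, and Theorem~\ref{thm:main} gives that every point of $\ocal$ is $\m_\Jcal$-regular (and symmetrically for $n_+=0$, i.e.\ $\ocal\subset-\overline\Omega_\Jcal$). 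Conversely, if both $n_+>0$ and $n_->0$, I would exhibit a non-regular point directly: any element of signature $(n_+,n_-)$ lies in $\ocal$ by Theorem~\ref{thm:orbits}, so I may choose one, $\xi_0$, whose nonzero spectral coefficients contain a cancelling pair $\lambda_i=-\lambda_j\neq0$. At $\xi_0$ the component $\Jcal_{ij}$ enters $\ker(l_{\xi_0})$, the rank of the distribution drops below its generic value on $\ocal$, and Theorem~\ref{thm:main} marks $\xi_0$ as non-regular. This establishes the asserted equivalence.

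For the definiteness I would invoke Proposition~\ref{prop:metric-Jordan} together with the $\tau$-orthogonality of distinct Peirce components (Proposition~\ref{prop:inner-product}). Writing a tangent vector at a regular $\xi$ as $v=l_\xi a$, one finds $\Gcal_\xi(v,w)=\tau\bigl(v,l_\xi^{-1}w\bigr)$, where $l_\xi^{-1}$ denotes the inverse of $l_\xi$ on $\operatorname{Im}(l_\xi)$. Since $l_\xi$ is diagonal over the Peirce components with eigenvalue $\tfrac12(\lambda_i+\lambda_j)$ and $\tau$ is positive definite, $\Gcal_\xi$ is block-diagonal and the sign of $\Gcal_\xi|_{\Jcal_{ij}}$ equals the sign of $\lambda_i+\lambda_j$. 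For $\ocal\subset\overline\Omega_\Jcal$ every surviving component has $\lambda_i+\lambda_j>0$, so $\Gcal$ is positive definite; for $\ocal\subset-\overline\Omega_\Jcal$ it is negative definite. Finally, invariance under $\Aut(\Jcal)$ is immediate: an automorphism preserves the Jordan product and therefore $\tau$, the distribution $\hh^\Jcal$, and the tensor $\cR$, hence also $\Gcal=\cR^{-1}$.

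I expect the main obstacle to be the converse half of the regularity dichotomy, i.e.\ guaranteeing that a genuinely non-regular point actually occurs in every mixed-signature orbit. The delicate points are (i) that Theorem~\ref{thm:orbits} genuinely places a cancelling configuration $\lambda_i=-\lambda_j$ in the \emph{same} orbit $\ocal$ as the non-cancelling ones, so that $\ocal$ really does contain a non-regular point, and (ii) that the resulting drop in $\operatorname{rank}(l_\xi)$ is exactly what Theorem~\ref{thm:main} detects as failure of integrability, rather than a harmless singular-distribution phenomenon. The boundary case $n_+<r$ or $n_-<r$ (orbits lying in $\partial\Omega_\Jcal$) also needs care: one must check that the null--null Peirce block contributes a kernel of constant dimension and never triggers non-regularity, so that the pure-signature orbits on the boundary remain entirely regular.
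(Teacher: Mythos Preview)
Your proposal is correct and follows essentially the same route as the paper: Theorem~B is not proved from scratch but is assembled from Theorem~\ref{thm:main} (the regularity criterion via spectral coefficients), Proposition~\ref{prop:metric-Jordan} (the Peirce-diagonal formula for $\Gcal_\xi$), and Proposition~\ref{prop:inner-product} (the $\Aut(\Jcal)$-invariance of $\Gcal$), exactly the three ingredients the paper names. One small slip: the $\tau$-orthogonality of distinct Peirce components is established in the Peirce decomposition discussion of Section~4.1, not in Proposition~\ref{prop:inner-product}; and your remark about ``constant kernel dimension'' is extraneous, since Theorem~\ref{thm:main} characterises regularity directly by the condition $\lambda_a+\lambda_b\neq 0$ for nonzero $\lambda_a,\lambda_b$, without reference to rank jumps.
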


For all regular $\xi \notin \pm \overline{\Omega}_\Jcal$ the form $\Gcal_\xi$ is indefinite, so the definiteness of $\Gcal_\xi$ gives a new characterization of $\Omega_\Jcal$.
We provide descriptions of the orbits $\ocal$ and the metric $\Gcal$ for the standard examples of positive simple real algebras. Moreover, the above results easily generalize to the case of non-simple positive Jordan algebras, as these algebras are direct sums of positive simple Jordan algebras.

This work is structured as follows.
In Section \ref{sec:prelim}, we set the notation and recall some standard results on generalized distributions and group actions on manifolds that are needed.
In Section \ref{sec:structure-general}, we discuss the structure group and the generalized distributions on an arbitrary finite-dimensional algebra $\cA$.
In Section \ref{sec: Jordan algebras}, we focus on Jordan algebras, prove the main results, and discuss relevant examples.
Let us stress that some results recalled in Section \ref{sec: Jordan algebras} are well known to researchers working with Jordan algebras, but we decided to recall them nonetheless in order to make the work as self-contained
as possible for readers, perhaps coming from information geometry, who are not familiar with Jordan algebras.
Finally, in Section \ref{sec: conclusions}, we discuss our results in relation with some important established results on the mathematics and geometry of Jordan algebras.

\section{Preliminary material} \label{sec:prelim}

\subsection{Notational conventions} \label{sec:notation}

For a finite-dimensional (real or complex) vector space $V$, we denote by $\Gl(V)$ the Lie group of linear automorphisms, whose Lie algebra $\gl(V)$ consists of all linear endomorphisms of $V$.

For finite-dimensional vector spaces $V$, $W$ and $U$, we define the contraction map
\be \label{eq:bracket1}
\langle \cdot, \cdot \rangle_V\colon\ V^\star \otimes V \otimes U \longrightarrow U, \qquad \langle \alpha, v \otimes u \rangle_V := \alpha(v) u,
\ee
where $V^\star$ denotes the dual space of $V$, and we shall usually omit the subscript if this causes no ambiguity. The notation \eqref{eq:bracket1} can also be used to denote maps
\[
V^\star \otimes \Lambda^k V^\star \otimes U \longrightarrow \Lambda^{k-1} V^\star \otimes U, \qquad V^\star \otimes S^k V^\star \otimes U \longrightarrow S^{k-1} V^\star \otimes U,
\]
denoting by $S^k V^\star$ the symmetric $k$-forms, since $\Lambda^k V^\star \hookrightarrow V \otimes \Lambda^{k-1} V^\star$ and $S^k V^\star \hookrightarrow V \otimes S^{k-1} V^\star$ are canonically included.

Finally, the dual of a linear map $\phi\colon V \to W$ is denoted as
\[
\phi^\star\colon\ W^\star \longrightarrow V^\star, \qquad \langle \theta, \phi(v) \rangle_W = \langle \phi^\star \theta, v \rangle_V.
\]


\subsection{Generalized distributions}

Let $M$ be a finite-dimensional, real smooth manifold.
A {\em generalized distribution on $M$} is a family $\dcal = (\dcal_p)_{p \in M}$ of subspaces $\dcal_p \subset T_pM$.
We let $\Gamma(\dcal)$ be the set of (smooth) vector fields $X$ on $M$ with $X_p \in \dcal_p$ for all $p$, and we call $\dcal$ {\em smooth} if for each $v \in \dcal_p$ there is a vector field $X \in \Gamma(\dcal)$ with $X_p = v$.

Given a smooth generalized distribution $\dcal$, we define the {\em Frobenius tensor ${\mathcal F}_p$ at $p \in M$} as
\[
{\mathcal F}_p\colon\ \Lambda^2 \dcal_p \longrightarrow T_pM/\dcal_p, \qquad (X_p, Y_p) \longmapsto [X,Y]_p \mod \dcal_p
\]
for $X, Y \in \Gamma(\dcal)$. It is straightforward to verify that ${\mathcal F}_p(X, Y)$ depends on $X_p$ and $Y_p$ only, i.e., ${\mathcal F} = ({\mathcal F}_p)_{p \in M}$ is a well-defined tensor field. We also define the generalized distribution
\[
{}[\dcal, \dcal]_p := \dcal_p + \{ [X, Y]_p \mid X, Y \in \Gamma(\dcal)\},
\]
so that the image of ${\mathcal F}_p$ is $[\dcal, \dcal]_p/\dcal_p$.

\begin{Definition}
We call a smooth generalized distribution $\dcal$ {\em involutive at $p \in M$}, if ${\mathcal F}_p = 0$ or, equivalently, if $[\dcal, \dcal]_p = \dcal_p$, and we call it {\em involutive}, if this holds for every $p$.

Furthermore, an (immersed) submanifold $N \subset M$ with $T_pN = \dcal_p$ for all $p \in N$ is called an {\em integral leaf of $\dcal$}. If there is an integral leaf of $\dcal$ containing $p$, then we call $\dcal$ {\em integrable at $p \in M$} and call $p$ an {\em integral point of $\dcal$}; if this is the case for each $p \in M$, then we call $\dcal$ {\em integrable}.
\end{Definition}

Clearly, if $\dcal$ is integrable (at $p$), then it is also involutive (at $p$); according to Frobenius' theorem, the converse of this statement holds if $\dcal$ has constant rank. However, if the rank of $\dcal$ is non-constant, then the converse may fail to hold \cite{Sussmann-1973}.

\subsection[G-manifolds]{$\boldsymbol{G}$-manifolds}\label{subsec: G-manifolds}

Let $G$ be a finite-dimensional, real Lie group with identity element $e$, Lie algebra $\g\cong T_{e}G$, and let $M$ be a $G$-manifold, i.e., a finite-dimensional, real smooth manifold with a smooth left action $\pi\colon G \times M \to M$, $(g, p) \mapsto g \cdot p$. For $p \in M$ we define the {\em stabilizer of $p$} to be the subgroup
\[
H_p := \{ g \in G \mid g \cdot p = p\} \subset G \qquad \text{with Lie algebra} \quad \h_p \subset \g.
\]
Evidently, $H_{g \cdot p} = gHg^{-1}$, and $\h_{g \cdot p} = \operatorname{Ad}_g(\h_p)$, so that the stabilizer on each $G$-orbit is unique up to conjugation.

We define the {\em orbit distribution on $M$} by
\[
\dcal^\g_p := \{(X_\ast)_p \mid X \in \g\},
\]
where $X_\ast$ denotes the action field on $M$. Evidently, $\dcal^\g$ is integrable, as the $G$-orbits are integral leaves of $\dcal^\g$.
Moreover,
\[ 
X \in \h_p \quad \Leftrightarrow \quad (X_\ast)_p = 0.
\]
Also recall that the map $X \mapsto X_\ast$ is a anti-homomorphism of Lie algebras, i.e.,
\be \label{eq:action-comm}
{}[X, Y]_\ast = - [X_\ast, Y_\ast].
\ee

For any linear subspace $\m \subset \g$, we define the smooth generalized distribution $\dcal^\m$ by
\be \label{eq:Dm}
\dcal^\m_p := \{(X_\ast)_p \mid X \in \m\} \subset \dcal^\g_p = T_p(G \cdot p),
\ee
and evidently,
\be \label{eq:Dm-criterion}
(X_\ast)_p \in \dcal^\m_p \quad \Leftrightarrow \quad X \in \m + \h_p.
\ee

\begin{Lemma} \label{lem:involutive}
Let $\m \subset \g$ be a linear subspace. Then the following are equivalent:
\begin{enumerate}\itemsep=0pt
\item[$(1)$] $\dcal^\m$ is involutive at $p \in \ocal$,
\item[$(2)$] $[\m, \m] \subset \m + \h_p$,
\item[$(3)$] $\dcal^{[\m, \m]}_p \subset \dcal^\m_p$.
\end{enumerate}
\end{Lemma}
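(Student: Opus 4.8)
The plan is to establish the two equivalences $(2)\Leftrightarrow(3)$ and $(1)\Leftrightarrow(2)$ separately, the first being essentially a reformulation and the second carrying the geometric content. For $(2)\Leftrightarrow(3)$ I would argue directly from the criterion \eqref{eq:Dm-criterion}. By definition \eqref{eq:Dm}, the space $\dcal^{[\m,\m]}_p$ is spanned by the vectors $(Z_\ast)_p$ with $Z\in[\m,\m]$, and \eqref{eq:Dm-criterion} says precisely that $(Z_\ast)_p\in\dcal^\m_p$ iff $Z\in\m+\h_p$. Hence $\dcal^{[\m,\m]}_p\subset\dcal^\m_p$ holds iff every $Z\in[\m,\m]$ lies in $\m+\h_p$, which is exactly $[\m,\m]\subset\m+\h_p$. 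No further work is needed here.

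For $(1)\Leftrightarrow(2)$ I would use the Frobenius tensor $\mathcal{F}_p$. The key observation is that the action fields $X_\ast$ with $X\in\m$ all belong to $\Gamma(\dcal^\m)$ and that, by the very definition of $\dcal^\m_p$, their values at $p$ span $\dcal^\m_p$. Since $\mathcal{F}_p$ is tensorial---its value on a pair depends only on the two vectors of $\dcal^\m_p$ to which the vector fields restrict---it follows that $\mathcal{F}_p=0$ iff $\mathcal{F}_p\big((X_\ast)_p,(Y_\ast)_p\big)=0$ for all $X,Y\in\m$. I would then compute this using the anti-homomorphism property \eqref{eq:action-comm}: one has $[X_\ast,Y_\ast]_p=-\big([X,Y]_\ast\big)_p$, so that $\mathcal{F}_p\big((X_\ast)_p,(Y_\ast)_p\big)=[X_\ast,Y_\ast]_p \bmod \dcal^\m_p$ vanishes iff $\big([X,Y]_\ast\big)_p\in\dcal^\m_p$ (the sign being irrelevant). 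Applying \eqref{eq:Dm-criterion} once more, this holds for all $X,Y\in\m$ iff $[X,Y]\in\m+\h_p$ for all such $X,Y$, i.e.\ iff $[\m,\m]\subset\m+\h_p$. Recalling that involutivity at $p$ means $\mathcal{F}_p=0$ (equivalently $[\dcal^\m,\dcal^\m]_p=\dcal^\m_p$), this closes the circle.

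The only genuinely delicate point, and the one I would be most careful about, is the reduction in $(1)\Leftrightarrow(2)$ from arbitrary sections in $\Gamma(\dcal^\m)$ to the distinguished spanning set $\{X_\ast \mid X\in\m\}$. This is exactly where the tensoriality of $\mathcal{F}_p$ is used: although $[\dcal^\m,\dcal^\m]_p$ is defined through all pairs $X,Y\in\Gamma(\dcal^\m)$, the fact that $\mathcal{F}_p(X_p,Y_p)$ depends only on $X_p,Y_p$, together with the surjective linear map $\m\to\dcal^\m_p$, $X\mapsto(X_\ast)_p$, lets me test the vanishing of $\mathcal{F}_p$ on the action fields alone. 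I would also note that $\dcal^\m$ is smooth precisely because these action fields are smooth and span it, so that $\mathcal{F}_p$ is indeed a well-defined tensor to begin with.
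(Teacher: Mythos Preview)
Your proof is correct and follows essentially the same route as the paper: both arguments establish $(1)\Leftrightarrow(2)$ by evaluating the Frobenius tensor on action fields via \eqref{eq:action-comm} and \eqref{eq:Dm-criterion}, and both reduce $(2)\Leftrightarrow(3)$ to the criterion \eqref{eq:Dm-criterion}. Your treatment is in fact a bit more careful, since you make explicit the tensoriality of $\mathcal{F}_p$ that justifies testing only on the spanning set $\{(X_\ast)_p \mid X\in\m\}$, a point the paper leaves implicit.
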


\begin{proof}
For $X, Y \in \m$, ${\mathcal F}_p(X_p, Y_p) = 0$ iff $[X_\ast, Y_\ast]_p \in \dcal^\m_p$, which, by equation \eqref{eq:action-comm}, is the case iff $([X, Y]_\ast)_p \in \dcal^\m_p$, and, by equation \eqref{eq:Dm-criterion}, this is the case iff $[X,Y] \in \m + \h_p$, showing the equivalence of the first two conditions.

The second condition is equivalent to saying that for each $X \in [\m,\m]$ there is a $Y \in \m$ such that~$X - Y \in \h_p$ or, equivalently, that for each $X \in [\m,\m]$ there is a $Y \in \m$ such that~$(X_\ast)_p = (Y_\ast)_p$, and this is evidently equivalent to the third condition.
\end{proof}

\begin{Definition} \label{def:regular}
Let $M$ be a $G$-manifold and $\m \subset \g$ a linear subspace. We call $p \in M$ an {\em $\m$-regular point} if $\dcal^\m_p = T_p\ocal$, where $\ocal \subset M$ is the $G$-orbit of $p$. The subset of $\m$-regular points in $\ocal$ is denoted by
$\ocal^{\rm reg}_{\m} \subset \ocal$.
\end{Definition}

As the rank of $\dcal^\m_p$ is a lower semicontinuous function in $p$, $\ocal^{\rm reg} \subset \ocal$ is open (but possibly empty).
As we shall see in later sections, $\ocal^{\rm reg}_{\m}$ may be a proper subset of $\ocal$ and is not necessarily connected.

\begin{Corollary} \label{cor:integrable}
Suppose that $\m \subset \g$ is a linear subspace such that
\be \label{eq:condition-integrable}
\g = \m + [\m,\m].
\ee
Then for each $p \in M$ the following are equivalent:
\begin{enumerate}\itemsep=0pt
\item[$(1)$] $p$ is involutive,
\item[$(2)$] $p$ is integrable,
\item[$(3)$] $p$ is an $\m$-regular point.
\end{enumerate}
In this case, the maximal integral leaf through $p$ is the connected component of $p$ in
\[
\ocal^{\rm reg}_{\m}  \subset \ocal  =  \G \cdot p.
\]
\end{Corollary}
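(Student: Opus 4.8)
The plan is to prove the cycle $(2)\Rightarrow(1)\Rightarrow(3)\Rightarrow(2)$, so that all three conditions become equivalent, and then to identify the maximal leaf. The implication $(2)\Rightarrow(1)$ needs nothing specific to this setting: it is the general fact, recorded right after the definition of involutivity, that an integrable generalized distribution is involutive at each of its integral points. For $(1)\Rightarrow(3)$ I would combine the hypothesis $\g=\m+[\m,\m]$ with Lemma~\ref{lem:involutive}: involutivity of $\dcal^\m$ at $p$ gives the inclusion $[\m,\m]\subset\m+\h_p$, and adding $\m$ and invoking the hypothesis yields $\g=\m+[\m,\m]\subset\m+\h_p$, whence $\g=\m+\h_p$. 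Then for every $Z\in\g$ the criterion \eqref{eq:Dm-criterion} gives $(Z_\ast)_p\in\dcal^\m_p$ (concretely, writing $Z=X+H$ with $X\in\m$, $H\in\h_p$, one has $(Z_\ast)_p=(X_\ast)_p$ since $(H_\ast)_p=0$). Hence $\dcal^\g_p\subset\dcal^\m_p$; the reverse inclusion is automatic from \eqref{eq:Dm}, so $\dcal^\m_p=T_p\ocal$ and $p$ is $\m$-regular.

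For $(3)\Rightarrow(2)$ I would produce an explicit integral leaf. Since $\dim\dcal^\m_q\le\dim\ocal$ with equality exactly on $\ocal^{\rm reg}_\m$, and the rank of $\dcal^\m$ is lower semicontinuous, the set $\ocal^{\rm reg}_\m$ is open in $\ocal$. Let $N$ be the connected component of $p$ in $\ocal^{\rm reg}_\m$. Being open in the orbit $\ocal$, it is a full-dimensional immersed submanifold with $T_qN=T_q\ocal$ for all $q\in N$; and every $q\in N$ is $\m$-regular, so $\dcal^\m_q=T_q\ocal=T_qN$. Thus $N$ is an integral leaf through $p$, and $p$ is integrable.

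It remains to see that this $N$ is the \emph{maximal} integral leaf. Let $\tilde N$ be any integral leaf through $p$. For each $q\in\tilde N$ we have $T_q\tilde N=\dcal^\m_q\subset\dcal^\g_q=T_q(G\cdot q)$, so $\tilde N$ is a connected submanifold everywhere tangent to the integrable orbit distribution $\dcal^\g$; since the $G$-orbits are its leaves and $p\in\tilde N\cap\ocal$, this forces $\tilde N\subset\ocal$. Along $\tilde N$ the dimension $\dim\dcal^\m_q=\dim\tilde N$ is constant and equals $\dim\ocal$ at the regular point $p$, so $\dcal^\m_q=T_q\ocal$ there, i.e., $\tilde N\subset\ocal^{\rm reg}_\m$; being connected and containing $p$, it lies in $N$. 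Hence $N$ is the maximal integral leaf through $p$.

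I expect the confinement of an arbitrary integral leaf $\tilde N$ to the single orbit $\ocal$ to be the main obstacle, since this is the one place where one must genuinely use that the $G$-orbits constitute the leaves of the integrable (but generally non-constant-rank) distribution $\dcal^\g$, together with the standard fact that a connected submanifold tangent to an integrable generalized distribution is contained in a single leaf. Everything else reduces to Lemma~\ref{lem:involutive}, the membership criterion \eqref{eq:Dm-criterion}, and the lower semicontinuity of the rank noted after Definition~\ref{def:regular}.
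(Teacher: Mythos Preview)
Your proof is correct and follows essentially the same route as the paper: use Lemma~\ref{lem:involutive} together with the hypothesis $\g=\m+[\m,\m]$ to get $(1)\Leftrightarrow(3)$, invoke the general implication $(2)\Rightarrow(1)$, and exhibit the connected component of $p$ in $\ocal^{\rm reg}_\m$ as the integral leaf for $(3)\Rightarrow(2)$. Your treatment of maximality---arguing that an arbitrary integral leaf is tangent to $\dcal^\g$ and hence confined to $\ocal$, and then of constant rank equal to $\dim\ocal$---is more carefully spelled out than the paper's one-line assertion that ``any integral leaf through $p$ must be (an open subset of) $\ocal^{\rm reg}_\m\subset\ocal$'', but the underlying idea is the same.
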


\begin{proof}
By Lemma \ref{lem:involutive}, $p$ is integrable iff $\dcal^{[\m, \m]}_p \subset \dcal^\m_p$, and as $\g = \m + [\m,\m]$, this is the case iff~$\dcal^\m_p = \dcal^\g_p$, i.e., iff $p$ is $\m$-regular. It follows that any integral leaf through $p$ must be (an open subset of) $\ocal^{\rm reg}_{\m} \subset \ocal$, whence the maximal (connected) leaf through $p$ is its path component in~$\ocal^{\rm reg}_{\m}$.
\end{proof}

\begin{Definition} \label{df:symm}
A {\em symmetric pair} is a pair $(\g, \k)$ of Lie algebras with a decomposition $\g = \k \oplus \m$ satisfying \be \label{eq:bracket}
{}[\k, \k] \subset \k, \qquad [\k, \m] \subset \m, \qquad [\m, \m] \subset \k.
\ee
We call this pair {\em transvective}, if $\g$ is generated by $\m$ as a Lie algebra, i.e., if $[\m, \m] = \k$.
\end{Definition}

Clearly, equation \eqref{eq:bracket} is equivalent to saying that the involution $\sigma\colon \g \to \g$ with $\k$ and $\m$ as the $(+1)$- and $(-1)$-eigenspace, respectively, is a Lie algebra automorphism.

\section[Structure groups and canonical distributions on duals of algebras]{Structure groups and canonical distributions\\ on duals of algebras}\label{sec:structure-general}

In this section, we shall consider a finite-dimensional algebra $\cA$, by which we simply mean a~finite-dimensional (real or complex) vector space with a~bilinear product $\bullet\colon \cA \times \cA \to \cA$, i.e., a~constant $(2,1)$-tensor.
We do not assume any further conditions on this multiplication such as associativity, Jacobi or Jordan identities, but we shall later discuss the general definitions in each of these cases.

The dual of $\bullet$ is a map $\cA^\star \to \cA^\star \otimes \cA^\star$, and as $T_\xi \cA^\star \cong \cA^\star$, we may regard this as a linear bivector field on $\cA^\star$:
\be \label{eq:R-bivectorfield}
\cR^\cA \in \Gamma(\cA^\star, T\cA^\star \otimes T\cA^\star), \qquad \cR^\cA_\xi (a, b) := \langle \xi, a \bullet b\rangle
\ee
for all $a,b \in \cA$.
Therefore, there is an induced multiplication on the space $C^\infty(\cA^\star)$ of real-valued, smooth functions on $\cA^{\star}$ which by abuse of notation we also denote by $\bullet$, given by
\be \label{eq:A-Poisson}
f \bullet g(\xi) := \big(\cR^\cA\big)_\xi (d_\xi f, d_\xi g) = \langle \xi, d_\xi f \bullet d_\xi g \rangle
\ee
with the canonical identification $d_\xi f, d_\xi g \in T^\ast_\xi \cA^\star \cong \cA^{\star\star} \cong \cA$. In particular, regarding $ \cA \subset C^\infty(\cA^\star) $ as the set of linear functions $f_a(\xi) := \langle \xi, a\rangle$, equation \eqref{eq:A-Poisson} implies
\[ 
f_a \bullet f_b = f_{a \bullet b},
\]
justifying the ambiguous use of the symbol $\bullet$. 
Contraction in the first entry yields a linear map
\be \label{eq:sharp-xi}
\sharp_\xi\colon\ T^\star_\xi \cA^\star \lra T_\xi \cA^\star, \qquad \theta \longmapsto \theta \lrcorner \cR^\cA_\xi,
\ee
whose image we call the {\em $\cA$-dual distribution} $\hh^\cA$ on $\cA^\star$ by
\be \label{eq:grad-dist}
\hh^\cA_\xi := \sharp_\xi(T_\xi^\star \cA^\star) \subset T_\xi \cA^\star.
\ee

\begin{Definition} \label{def:structure-Lie} For a finite-dimensional algebra $(\cA, \bullet)$ we define the following:
\begin{enumerate}\itemsep=0pt
\item[(1)]
For $a \in \cA$ we let $l_a \in \gl(\cA)$ be the map $(b \mapsto a \bullet b) \in \gl(\cA)$.
\item[(2)]
The {\em structure Lie algebra of $\cA$ }is the Lie subalgebra $\g(\cA) \subset \gl(\cA)$ generated by $\m_\cA := \{l_a \mid a \in \cA\}$.
\item[(3)]
The {\em structure Lie group of $\cA$ }is the connected Lie subgroup $\G(\cA) \subset \Gl(\cA)$ with Lie algebra~$\g(\cA)$.
\item[(4)]
A {\em derivation of $\cA$ }is a linear map $d \in \gl(\cA)$ with $d(a \bullet b) = (da) \bullet b + a \bullet (db)$.
\item[(5)]
An {\em automorphism of $\cA$ }is a linear isomorphism $g\colon \cA \to \cA$ with $g(a \bullet b) = (ga) \bullet (gb)$.
\end{enumerate}
\end{Definition}

It is straightforward to verify that the automorphisms and derivations form a regular Lie subgroup and a Lie subalgebra $\Aut(\cA) \subset \Gl(\cA)$ and $\Der(\cA) \subset \gl(\cA)$, respectively, called the {\em automorphism group }and {\em derivation algebra of $\cA$}, respectively. In fact, $\Der(\cA)$ is the Lie algebra of $\Aut(\cA)$. Moreover, $g \in \Aut(\cA)$ and $d \in \Der(\cA)$ iff for all $a \in \cA$ we have
\be \label{eq:der-leftaction}
g l_a g^{-1} = l_{ga}, \qquad [d, l_a] = l_{da}.
\ee
That is, the adjoint action of $\Aut(\cA)$ and $\Der(\cA)$ on $\gl(\cA)$ preserves the subspace $\m_\cA$ and hence the structure Lie algebra $\g(\cA)$ and structure Lie group $\G(\cA)$.\footnote{Actually, it would be more accurate to call $\g(\cA)$ and $\G(\cA)$ the {\em left-}structure Lie algebra and group, respectively, and to define the {\em right-}structure Lie algebra and group analogously. However, for simplicity we shall restrict ourselves to the left-structure case, as the right-structure case can be treated in complete analogy.}

For $\xi \in \cA^\star$ and $a,b \in \cA$, we have
\[
\langle l_a^\ast(\xi), b \rangle = \langle \xi, l_a(b)\rangle = \langle \xi, a \bullet b\rangle \stackrel{(\ref{eq:R-bivectorfield})}= \big\langle a \lrcorner \cR^\cA_\xi, b\big\rangle \stackrel{(\ref{eq:sharp-xi})}= \langle \sharp a, b
\rangle_\xi,
\]
so that $l_a^\ast = \sharp a \in \Gamma(\cA^\star, T^\ast \cA^\star)$, regarded as a linear $1$-form on $\cA^\star$. It follows that
\be \label{eq:grad=dist}
\hh^\cA_\xi = \dcal^{\m_\cA}_\xi
\ee
with $\hh^\cA_\xi$ from equation \eqref{eq:grad-dist} and $\m_\cA$ from Definition \ref{def:structure-Lie}, regarding $\cA^\star$ as a $\G(\cA)$-manifold via the dual representation $\imath\colon \G(\cA) \to \Gl(\cA^\star)$.

More generally, for a smooth function $f \in C^\infty(\cA^\star)$, we define the {\em $\cA$-dual vector field of $f$} as
\be \label{eq:grad-A}
\nabla^{\cA}(f) := \sharp df \in \mathfrak{X}(\cA^\star).
\ee

Since $\sharp a = \sharp f_a = \nabla^\cA(f_a)$, it follows that the $\cA$-dual distribution may also be characterized by
\[
\hh^\cA_\xi = \big\{ \big(\nabla^\cA(f)\big)_\xi \mid f \in C^\infty(\cA^\star)\big\},
\]
and unwinding the definitions, it easily follows that for $f, g \in C^\infty(\cA^\star)$ that
\[
\big(\nabla^\cA(f)\big)(g) = \{f, g\}_\cA.
\]

\begin{Remark} If the multiplication $\bullet$ is symmetric (e.g., if $\cA$ is a Jordan algebra), the dual vector field $\nabla^\cA(f)$ is usually referred to as the {\em gradient vector field of $f$}, while in the case of an antisymmetric multiplication $\bullet$ (e.g., if $\cA$ is a Lie algebra), it is called the {\em Hamiltonian vector field of $f$}. That is, the term {\em $\cA$-dual vector field }subsumes both cases.

We wish to caution the reader that in case of a skew-symmetric multiplication $\bullet$, the notation~$\nabla^{\cA}(f)$ for the Hamiltonian vector does not match the standard convention. We use it nevertheless to unify our notation.
\end{Remark}

%

If $g \in \Aut(\cA)$ is an automorphism, then by equation \eqref{eq:der-leftaction} the action of $g^\ast$ on $\cA^\star$ preserves the distribution $\hh^\cA$ and hence permutes integral leaves of equal dimensions, preserving $\m_\cA$-regular points.

As it turns out, if the product $\bullet$ is symmetric or skew-symmetric, then there is a canonical bilinear pairing on $\hh^\cA$.

\begin{Proposition} \label{prop:inner-product}
Let $(\cA, \bullet)$ be a finite-dimensional real algebra such that $\bullet$ is symmetric $($skew-symmetric, respectively$)$.
Then on $\hh^\cA_\xi = \dcal^{\m_\cA}_\xi \subset T_\xi \cA^\star$ there is a canonical non-degenerate symmetric $($skew-symmetric, respectively$)$ bilinear form, given by
\be \label{eq:inner-productA}
\Gcal_\xi (l_a^\ast(\xi), l_b^\ast(\xi)) := \langle \xi, a \bullet b \rangle.
\ee
Furthermore, $\Gcal$ is preserved by the action of the automorphism group $\Aut(\cA)$.
\end{Proposition}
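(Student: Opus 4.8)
The plan is to show that the formula \eqref{eq:inner-productA} is well-defined, i.e.\ that $\Gcal_\xi$ depends only on the vectors $l_a^\ast(\xi), l_b^\ast(\xi) \in \hh^\cA_\xi$ and not on the choice of representatives $a, b \in \cA$, and then to read off non-degeneracy and the claimed symmetry type directly from the structure already established. The central observation is the identity $l_a^\ast = \sharp a$ derived above, together with the description $\hh^\cA_\xi = \sharp_\xi(T^\star_\xi \cA^\star)$ from \eqref{eq:grad-dist}. Thus $\sharp_\xi\colon \cA \cong T^\star_\xi \cA^\star \to T_\xi \cA^\star$ has image $\hh^\cA_\xi$, and what we must prove is that the bilinear form $(a,b) \mapsto \langle \xi, a \bullet b\rangle = \cR^\cA_\xi(a,b)$ on $\cA$ descends to a well-defined, non-degenerate form on the quotient $\cA/\ker \sharp_\xi \cong \hh^\cA_\xi$.

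First I would identify the kernel of $\sharp_\xi$. Since $\sharp_\xi(a) = a \lrcorner \cR^\cA_\xi$, we have $a \in \ker\sharp_\xi$ iff $\cR^\cA_\xi(a,b) = \langle \xi, a \bullet b\rangle = 0$ for all $b \in \cA$. When $\bullet$ is symmetric this is the same as the condition $\langle \xi, b \bullet a\rangle = 0$ for all $b$, i.e.\ $\ker \sharp_\xi$ is exactly the radical of the symmetric bilinear form $\cR^\cA_\xi$ on $\cA$; in the skew-symmetric case it is the radical of the alternating form $\cR^\cA_\xi$. Well-definedness of $\Gcal_\xi$ is then immediate: if $\sharp_\xi(a) = \sharp_\xi(a')$ then $a - a' \in \ker\sharp_\xi$, so $\cR^\cA_\xi(a - a', b) = 0$ for all $b$, whence $\cR^\cA_\xi(a,b) = \cR^\cA_\xi(a',b)$; symmetry (or skew-symmetry) of $\cR^\cA_\xi$ gives independence from the second representative as well. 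The induced form on the quotient $\cA/\ker\sharp_\xi$ is non-degenerate by construction, since we have quotiented out precisely its radical, and under the isomorphism $\cA/\ker\sharp_\xi \xrightarrow{\;\sim\;} \hh^\cA_\xi$ this is exactly $\Gcal_\xi$. The symmetry or skew-symmetry of $\Gcal_\xi$ is inherited directly from that of $\cR^\cA_\xi$, which in turn is inherited from $\bullet$ via \eqref{eq:R-bivectorfield}.

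For automorphism invariance, I would use \eqref{eq:der-leftaction}, namely $g l_a g^{-1} = l_{ga}$ for $g \in \Aut(\cA)$. The group acts on $\cA^\star$ through the dual representation $\imath$, and I would check that $\imath(g)$ carries $\hh^\cA_\xi$ to $\hh^\cA_{\imath(g)\xi}$ by tracking how $\sharp$ transforms: concretely, the relation $l_{ga}^\ast = \sharp(ga)$ combined with $l_{ga}^\ast = (g^{-1})^\ast l_a^\ast g^\ast$ (the dual of $g l_a g^{-1} = l_{ga}$) should give the equivariance $\imath(g)_\ast \circ \sharp_\xi = \sharp_{\imath(g)\xi} \circ (\imath(g)^{-1})^\ast$. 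Then invariance reduces to the computation $\langle \imath(g)\xi, (ga)\bullet(gb)\rangle = \langle \imath(g)\xi, g(a\bullet b)\rangle = \langle \xi, a \bullet b\rangle$, using the automorphism property $g(a\bullet b) = (ga)\bullet(gb)$ and the definition of the dual action.

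The step I expect to be most delicate is not any single computation but the bookkeeping of dual maps and the identification $T^\star_\xi\cA^\star \cong \cA$: one must be careful that the isomorphism $\sharp_\xi$ sends the fixed vector space $\cA$ (independent of $\xi$) onto the $\xi$-dependent subspace $\hh^\cA_\xi$, so that the rank of $\Gcal_\xi$ equals $\dim \hh^\cA_\xi$, which varies with $\xi$. Making precise that ``non-degenerate'' means non-degenerate \emph{on $\hh^\cA_\xi$} (equivalently, modulo the radical of $\cR^\cA_\xi$) — rather than on all of $\cA^\star$ — is the conceptual crux, and the transposition identities for $\sharp$ must be applied pointwise in $\xi$ with this varying rank in mind.
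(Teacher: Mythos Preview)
Your argument is correct and follows essentially the same route as the paper: the paper's one-line identity $\langle \xi, a \bullet b\rangle = \langle l_a^\ast(\xi), b\rangle = \pm \langle l_b^\ast(\xi), a\rangle$ is precisely your statement that $\ker\sharp_\xi$ equals the radical of $\cR^\cA_\xi$, from which well-definedness and non-degeneracy on the quotient $\cA/\ker\sharp_\xi \cong \hh^\cA_\xi$ follow immediately, and the invariance under $\Aut(\cA)$ is deduced in both cases from the dual of the relation $g l_a g^{-1} = l_{ga}$. Your writeup is more explicit about the quotient-by-radical mechanism, but the logic is identical.
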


\begin{proof} Since $\langle \xi, a \bullet b \rangle = \langle l_a^\ast(\xi), b\rangle = \pm \langle l_b^\ast(\xi), a\rangle$, where the sign $\pm$ depends on the symmetry or skew-symmetry of $\bullet$, it follows that $\Gcal$ is indeed well defined and non-degenerate.

Finally, if $g \in \Aut(\cA)$ is an automorphism, then equation \eqref{eq:der-leftaction} implies that
\[
g^\ast l_a^\ast \big(g^{-1}\big)^\ast = l_{g^{-1}a}^\ast
\]
and from here, the invariance of $\Gcal$ under the action of the automorphism group follows.
\end{proof}

\begin{Definition} \label{def:regular-A} Let $\cA$ be an algebra. A $\G(\cA)$-orbit $\ocal \subset \cA^\star$ is called {\em $\m_\cA$-regular } if $\ocal^{\rm reg}_{\m_\cA} = \ocal$.
\end{Definition}

We shall now give classes of examples of these notions.

{\bf 1.\ Lie algebras.}
Let $(\cA, \bullet) = (\g, [\cdot, \cdot])$ be a Lie algebra. Then the induced section $\Lambda := \cR^\g \in \Gamma\big(\g^\star, \Lambda^2 T\g^\star\big)$ from equation \eqref{eq:R-bivectorfield} is a skew-symmetric bi-vector field, and the Jacobi identity implies that the Schouten--Nijenhuis bracket $[\Lambda, \Lambda] \in \Gamma(\g^\star, \Lambda^3 T\g^\star)$ vanishes \cite{Lichnerowicz-1977,Tulczyjew-1974}, so that~$\Lambda$ defines a linear Poisson structure $\{\cdot,\cdot\}$ on $\g^\star$, also known as the {\em Kirillov--Kostant--Souriau structure}~\cite{Kirillov-2004}.

Comparing our notions with those established for Poisson manifolds, we observe that for a~function $f \in C^\infty(\g^\star)$, the $\g$-gradient vector field $\nabla^{\cA}(f)$ corresponds to the {\em Hamiltonian vector field $X_f$} for Poisson manifolds, so that the dual distribution $\hh^\cA_\xi$ from equation \eqref{eq:grad-dist} is the Hamiltonian distribution of the Poisson manifold.
It is integrable, as the Hamiltonian vector fields satisfy the identity
\[
[X_f, X_g] = -X_{\{f,g\}}.
\]
The Jacobi identity implies that $l_a = ad_a$ satisfies $[l_a, l_b] = l_{[a,b]}$, so that $\m_\cA$ is closed under the commutator bracket and therefore, $\g(\cA) = \m_\cA \cong \g/\mathfrak{z}(\g)$.
That is, the action of the structure group is induced by the coadjoint action of $G$ on $\g^\star$, and the skew-symmetric non-degenerate bilinear form $\Gcal$ on $\hh^\cA$ from equation \eqref{eq:inner-productA} coincides with the symplectic form on each coadjoint orbit in $\g^\star$.
By the Jacobi identity, this action consists of automorphisms of the Lie algebra structure, whence this symplectic form is preserved under the coadjoint action.

Therefore, the integral leaves of $\m_\cA$ are the coadjoint orbits of $\g^\star$, equipped with their canonical symplectic form, and hence, each orbit is regular in the sense of our definition.

{\bf 2.\ Associative algebras.} The associativity of the product $\bullet$ is equivalent to saying that $l_a l_b  = l_{a\bullet b}$, so that $\{l_a \mid a \in \cA\} \subset \gl(\cA)$ is a subalgebra, which means that the structure algebra~$\g(\cA)$ equals $\m_\cA$ with the Lie bracket being the commutator.

Thus, if we regard $\cA$ as a Lie algebra with the Lie bracket $[a,b] := a \bullet b - b \bullet a$, then the $\G(\cA)$-orbits are the coadjoint orbits on $\cA^\star$, regarded as the dual of a Lie algebra and thus described in the preceding paragraph.

Note that by Proposition \ref{prop:inner-product} the bilinear form $\Gcal$ on these orbits only exists if $\bullet$ is symmetric or antisymmetric.

If $\cA$ is a {\em commutative} and associative algebra, then $\G(\cA)$ and $\g(\cA)$ are {\em abelian} Lie groups, respectively.
In this case, the $\G(\cA)$-orbits of $\cA^\star$ are diffeomorphic to the direct product of a~torus and Euclidean space.

In the two preceding cases, $\m_\cA$ is closed under Lie brackets, so that it coincides with the structure algebra $\g(\cA)$.
This implies that, by the very definition, $\m_\cA$ is integrable having the $\G(\cA)$-orbits in $\cA^\star$ as leaves.
In particular, all orbits are $\m_\cA$-regular.

In contrast, for a {\em Jordan algebra $\Jcal$}, it is no longer true that $\m_\Jcal$ is a Lie algebra, so that not all $\G(\Jcal)$-orbits on the dual $\Jcal^\star$ are $\m_\Jcal$-regular in our sense.
Since the Jordan product is symmetric,
the non-degenerate form $\Gcal$ from (\ref{eq:inner-productA}) defines a pseudo-Riemannian metric on the regular part $\ocal^{\rm reg}_{\m_\Jcal}$ of each orbit.

We shall describe these structures on the $\G(\Jcal)$-orbits on $\Jcal^\star$ and the pseudo-Riemannian metric $\Gcal$ in more details, and we will see how, for some specific type of positive Jordan algebras, and suitable orbits, $\Gcal$ is intimately connected with either the Fisher--Rao metric tensor or with the Bures--Helstrom metric tensor used in classical and quantum information geometry, respectively.
This result strengthen the connection between Jordan algebras and information geometry initially hinted at in \cite{C-J-S-2020-02,C-J-S-2020}.

\section{Jordan algebras and Jordan distributions}\label{sec: Jordan algebras}

Let $\Jcal$ be a real, finite-dimensional Jordan algebra, that is, a real vector space endowed with a~bilinear symmetric product $\{\cdot,\cdot\}$, satisfying for $x,y\in\Jcal$ the {\em Jordan identity}
\[ 
\{\{x, y\}, \{x, x\}\} = \{x, \{y, \{x, x\}\}\}.
\]

By the notions established in the preceding section, we may associate with a Jordan algebra the symmetric bivector field $\cR^\Jcal \in \Gamma\big(\Jcal^\star, S^2(T\Jcal^\star)\big)$ from equation \eqref{eq:R-bivectorfield}, the musical operator $\#_\Jcal\colon T^\ast \Jcal^\star \to T\Jcal^\star$ from equation \eqref{eq:sharp-xi}, the $\Jcal$-dual vector field $\nabla^\Jcal f = \#df \in \mathfrak{X}(\Jcal^\star)$ from~\eqref{eq:grad-A}, and the induced $\Jcal$-dual distribution $\hh^\Jcal \subset T\Jcal^\star$ from equation \eqref{eq:grad-dist}.

In particular, we have $\hh^\Jcal = \dcal^{\m_\Jcal}$ by equation \eqref{eq:grad=dist}, where
\[
\m_\Jcal = \{l_x^\ast \mid x \in \Jcal\} \subset \gl(\Jcal^\star)
\]
is the space of (left-)multiplications with elements $x \in \Jcal$, acting on the dual space $\Jcal^\star$.
For every $\m_\Jcal$-regular point $\xi$, the vector space $\hh^\Jcal_\xi = \dcal^{\m_\Jcal}_\xi$ carries the non-degenerate symmetric bilinear form $\Gcal_\xi$ defined in equation \eqref{eq:inner-productA}.

As we pointed out before, the space $\m_\Jcal$ of (left-)multiplication in $\Jcal$ is not closed under Lie brackets in general.
However, the following is known.

\begin{Lemma}[{cf.\ \cite[Lemma IV.7]{Koecher-1999}}] \label{structure Lie algebra}
 For $x, y \in \Jcal$, the commutator $[l_x, l_y]$ is a derivation of $\Jcal$, and for each $d \in \Der(\Jcal)$
\be \label{eq:Jacobi-g(J)} [d, [l_x, l_y]] = [l_{dx}, l_y] + [l_x, l_{dy}].
\ee
\end{Lemma}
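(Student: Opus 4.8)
The plan is to handle the two assertions in turn, observing that the real work is the first one---that $D_{x,y} := [l_x, l_y]$ is a derivation---while the displayed formula \eqref{eq:Jacobi-g(J)} falls out formally from the Jacobi identity at the very end. By the characterization of derivations recorded in \eqref{eq:der-leftaction}, namely $d \in \Der(\Jcal)$ iff $[d, l_a] = l_{da}$ for every $a \in \Jcal$, the first assertion is equivalent to the single operator identity
\[
\big[[l_x, l_y], l_a\big] = l_{[l_x, l_y]a} \qquad \text{for all } a \in \Jcal,
\]
where $[l_x, l_y]a = \{x, \{y, a\}\} - \{y, \{x, a\}\}$. Everything thus reduces to this multilinear identity in $x, y, a$.

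To get at it, I would first record the linearized forms of the Jordan identity. Using commutativity, the Jordan identity is equivalent to the operator relation $[l_{x^2}, l_x] = 0$. Substituting $x \mapsto x + tw$ and extracting the coefficient of $t$ gives
\[
[l_{x^2}, l_w] + 2[l_{\{x, w\}}, l_x] = 0,
\]
and a further substitution $x \mapsto x + su$ with extraction of the coefficient of $s$ yields the fully symmetric identity
\[
[l_{\{x, u\}}, l_w] + [l_{\{u, w\}}, l_x] + [l_{\{x, w\}}, l_u] = 0.
\]
At the level of elements, the same double polarization produces the fully linearized identity $\sum \{\{p, q\}, \{r, y\}\} = \sum \{r, \{\{p, q\}, y\}\}$, where both sums range over the three ways of splitting $\{x, u, z\}$ into an unordered pair $\{p, q\}$ and a singleton $r$.

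The crux is to derive the reduced identity from these linearizations. Pairing it with an arbitrary test element $w$ and using commutativity, the claim becomes the vanishing of
\[
\{x, \{y, \{a, w\}\}\} - \{y, \{x, \{a, w\}\}\} - \{a, \{x, \{y, w\}\}\} + \{a, \{y, \{x, w\}\}\} - \{w, \{x, \{y, a\}\}\} + \{w, \{y, \{x, a\}\}\},
\]
a quantity that is multilinear in $x, y, a, w$ and antisymmetric under $x \leftrightarrow y$. I would establish this by feeding suitable elements into the fully linearized identity and combining the resulting relations, using the once-linearized identity to reconcile the depth-three products $\{x, \{y, \{a, w\}\}\}$ appearing above with the shallower products in the linearized Jordan identity. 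This matching of nesting depths is precisely the bookkeeping that I expect to be the main obstacle; it is the content of \cite[Lemma~IV.7]{Koecher-1999}, which I would either reproduce in detail or simply cite.

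Finally, once $[l_x, l_y] \in \Der(\Jcal)$ is known, formula \eqref{eq:Jacobi-g(J)} is immediate (and in fact needs only that $d$ is a derivation): since the Jacobi identity makes $\operatorname{ad}_d$ a derivation of the Lie algebra $\gl(\Jcal)$,
\[
[d, [l_x, l_y]] = [[d, l_x], l_y] + [l_x, [d, l_y]] = [l_{dx}, l_y] + [l_x, l_{dy}],
\]
where the second equality substitutes $[d, l_x] = l_{dx}$ and $[d, l_y] = l_{dy}$ from \eqref{eq:der-leftaction}.
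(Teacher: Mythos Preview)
The paper does not prove this lemma at all; it simply records it with a citation to \cite[Lemma~IV.7]{Koecher-1999}. Your proposal therefore already goes further than the paper, and what you write is correct. Your linearizations of the Jordan identity are right, your reduction of the derivation claim to the operator identity $\big[[l_x,l_y],l_a\big]=l_{[l_x,l_y]a}$ via \eqref{eq:der-leftaction} is exactly the standard route, and you are honest that the remaining combinatorial matching is precisely the content of Koecher's lemma, which you may cite just as the paper does. Your argument for \eqref{eq:Jacobi-g(J)} is complete and clean: it uses only the Jacobi identity in $\gl(\Jcal)$ together with $[d,l_x]=l_{dx}$ from \eqref{eq:der-leftaction}, and you correctly observe that it does not even require the first assertion.
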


We denote by $\Der_0(\Jcal)$ the span of all elements of the form $[l_x, l_y]$ for $x,y \in \Jcal$.
By equation~\eqref{eq:Jacobi-g(J)}, $\Der_0(\Jcal) \subset \Der(\Jcal)$ is an ideal whose elements are called {\em inner derivations of $\Jcal$}.
This fact can be used to describe the structure Lie algebra of $\Jcal$.

\begin{Definition}\label{prop: structure of the Lie algebra of the structure group}
We define the {\em extended structure Lie algebra $\hat \g(\Jcal)$ of $\Jcal$ }as follows.
As a vector space, $\hat \g(\Jcal)$ is defined by
\[
\hat \g(\Jcal) = \Der_0(\Jcal) \oplus \Jcal.
\]
The Lie bracket on $\hat \g(\Jcal)$ is defined as follows:
\begin{itemize}\itemsep=0pt
\item on $\Der_0(\Jcal) \subset \Der(\Jcal) \subset \gl(\Jcal)$, the Lie bracket is just the commutator between linear maps;
\item for $d \in \Der_0(\Jcal)$ and $x \in \Jcal$, $[d, x] = - [x, d] := d(x) \in \Jcal$;
\item for $x,y \in \Jcal$ we set $[x,y] := [l_x, l_y] \in \Der_0(\Jcal)$.
\end{itemize}
\end{Definition}

In fact, the Jacobi identity for this bracket is easily verified using the definitions and equation~\eqref{eq:Jacobi-g(J)}.
By the definition of this Lie bracket, it follows that ($\hat \g(\Jcal),\Der_0(\Jcal) $) is a transvective symmetric pair in the sense of Definition \ref{df:symm}.

There is a canonical Lie algebra representation of $\hat \g(\Jcal)$ on $\Jcal$, called the {\em standard representation}, defined by
\be \label{eq:can-act}
\phi\colon\ \hat \g(\Jcal) \longrightarrow \gl(\Jcal), \qquad \begin{cases} \phi(d) := d & \text{for $d \in \Der_0(\Jcal) \subset \gl(\Jcal)$},\\ \phi(x) := l_x & \text{for $x \in \Jcal$}. \end{cases}
\ee
Indeed, this defines a Lie algebra homomorphism by the definition of the Lie bracket on $\hat \g(\Jcal)$ and by equation \eqref{eq:der-leftaction}.

Observe that the image $\phi(\hat \g(\Jcal)) \subset \gl(\Jcal)$ is generated by all $l_x$, $x \in \Jcal$, whence equals the structure Lie algebra $\g(\Jcal)$ from Definition \ref{def:structure-Lie}. Thus, there is a surjective Lie group homomorphism $\hat \G(\Jcal) \to \G(\Jcal)$ with differential $\phi$, where $\G(\Jcal) \subset \Gl(\Jcal)$ is the structure group from Definition \ref{def:structure-Lie}.\footnote{Definition \ref{def:structure-Lie} for a Jordan algebra $\Jcal$ coincides with the definition of the structure Lie group and the structure Lie algebra of a Jordan algebra, e.g., in \cite[Chapter IV]{Koecher-1999}.}

In general, $\phi$ may fail to be injective (the kernel of $\phi$ contains the center of $\mathfrak{z}(\Jcal) \subset \Jcal \subset \g(\Jcal)$), so that the structure algebra $\g(\Jcal)$ and the extended structure algebra $\hat \g(\Jcal)$ may not be isomorphic.

Then we obtain the following integrability criterion.

\begin{Proposition} \label{prop:Jordan-integrable}
Let $\Jcal$ be a Jordan algebra.
Then, for the distribution $\hh^\Jcal$ from equation~\eqref{eq:grad-dist}, the following assertions are equivalent.
\begin{enumerate}\itemsep=0pt
\item[$(1)$] $\hh_\xi^\Jcal$ is involutive at $\xi \in \Jcal^\star$,
\item[$(2)$] $\hh_\xi^\Jcal$ is integrable at $\xi \in \Jcal^\star$,
\item[$(3)$] $\Der_0(\Jcal) \cdot \xi \subset \Jcal \cdot \xi$, where the multiplication refers to the dual action of $\Der_0(\Jcal)$, $\Jcal \subset \hat \g(\Jcal)$ on $\Jcal^\star$.
\end{enumerate}
If this is the case, then the maximal integral leaf through $\xi$ is the connected component of $\xi$ in~$\ocal^{\rm reg}_{\m_\Jcal } \subset \ocal = \G(\Jcal) \cdot \xi$.
\end{Proposition}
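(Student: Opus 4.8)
The plan is to recognize the statement as a direct application of Corollary~\ref{cor:integrable} and Lemma~\ref{lem:involutive} to the $\G(\Jcal)$-manifold $\Jcal^\star$, once the abstract integrability hypothesis \eqref{eq:condition-integrable} has been verified for the relevant subspace. By equation~\eqref{eq:grad=dist}, the distribution $\hh^\Jcal$ equals $\dcal^{\m_\Jcal}$, where $\m_\Jcal \subset \g(\Jcal)$ is the span of the left multiplications $l_x$ and $\Jcal^\star$ is viewed as a $\G(\Jcal)$-manifold through the dual representation. Thus I set $\g = \g(\Jcal)$ and $\m = \m_\Jcal$ in the notation of Section~\ref{subsec: G-manifolds}.

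First I would establish the algebraic identity $\g(\Jcal) = \m_\Jcal + [\m_\Jcal, \m_\Jcal]$, which is exactly the hypothesis~\eqref{eq:condition-integrable} of Corollary~\ref{cor:integrable}. By Lemma~\ref{structure Lie algebra}, $[\m_\Jcal, \m_\Jcal] = \Der_0(\Jcal)$; moreover equation~\eqref{eq:der-leftaction} gives $[d, l_x] = l_{dx} \in \m_\Jcal$ for $d \in \Der_0(\Jcal)$, and $\Der_0(\Jcal)$ is an ideal in $\Der(\Jcal)$, so the subspace $\m_\Jcal + \Der_0(\Jcal)$ is already closed under the commutator bracket. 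Since it contains the generators $l_x$ of $\g(\Jcal)$, it must coincide with $\g(\Jcal)$. Equivalently, this is precisely the transvective property $[\m, \m] = \k$ of the symmetric pair $(\hat \g(\Jcal), \Der_0(\Jcal))$, transported to $\g(\Jcal)$ along the standard representation~\eqref{eq:can-act}.

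With \eqref{eq:condition-integrable} in hand, Corollary~\ref{cor:integrable} immediately yields the equivalence of assertions $(1)$ and $(2)$, together with the identification of the maximal integral leaf through $\xi$ as the connected component of $\xi$ in $\ocal^{\rm reg}_{\m_\Jcal} \subset \ocal = \G(\Jcal) \cdot \xi$, which is the final claim. To bring in $(3)$, I would invoke Lemma~\ref{lem:involutive}: involutivity of $\dcal^{\m_\Jcal}$ at $\xi$ is equivalent to $\dcal^{[\m_\Jcal, \m_\Jcal]}_\xi \subset \dcal^{\m_\Jcal}_\xi$. Unwinding the dual action gives $\dcal^{\m_\Jcal}_\xi = \Jcal \cdot \xi$ and, since $[\m_\Jcal, \m_\Jcal] = \Der_0(\Jcal)$, also $\dcal^{[\m_\Jcal, \m_\Jcal]}_\xi = \Der_0(\Jcal) \cdot \xi$, so the inclusion is exactly assertion $(3)$.

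The only genuinely new input is the algebraic identity $\g(\Jcal) = \m_\Jcal + [\m_\Jcal, \m_\Jcal]$; everything else is formal. Accordingly, the main point requiring care is the bookkeeping that matches the two descriptions of the distributions: I must check that the action fields of $\m_\Jcal$ and of $[\m_\Jcal, \m_\Jcal] = \Der_0(\Jcal)$ under the dual representation are identified, up to the usual sign, with the sets $\Jcal \cdot \xi$ and $\Der_0(\Jcal) \cdot \xi$ appearing in $(3)$, so that the three incarnations of involutivity furnished by Lemma~\ref{lem:involutive} translate correctly into the stated conditions. Once this identification is in place, the proposition follows directly.
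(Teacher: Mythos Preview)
Your proposal is correct and follows essentially the same strategy as the paper: verify condition~\eqref{eq:condition-integrable} and apply Corollary~\ref{cor:integrable} together with Lemma~\ref{lem:involutive}. The only cosmetic difference is that the paper works upstairs with the extended structure algebra $\hat\g(\Jcal)$ acting on $\Jcal^\star$ through the standard representation~\eqref{eq:can-act} (where $\hat\g(\Jcal) = \Jcal + [\Jcal,\Jcal]$ is immediate from the transvective symmetric pair structure), whereas you work downstairs in $\g(\Jcal)$ and supply the short argument that $\m_\Jcal + \Der_0(\Jcal)$ is a Lie subalgebra containing the generators; you even note this equivalence yourself.
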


\begin{proof}
The map $\phi$ from equation \eqref{eq:can-act} defines an action of $\hat\G(\Jcal)$ on $\Jcal^\star$ such that, by equation~\eqref{eq:grad=dist}, it is $\hh_\xi^\Jcal = \dcal_\xi^\Jcal$.
Evidently, $\hat \G(\Jcal) \cdot \xi = \G(\Jcal) \cdot \xi$.
Since ($\hat \g(\Jcal),\Der_0(\Jcal) $) is a transvective symmetric pair, equation \eqref{eq:condition-integrable} is satisfied for $\m := \Jcal \subset \hat \g(\Jcal)$, and the assertion now follows from Corollary \ref{cor:integrable}, as by equation \eqref{eq:Dm} it is
\[
\dcal^{[\Jcal,\Jcal]}_\xi = \dcal^{\Der_0(\Jcal)}_\xi = \Der_0(\Jcal) \cdot \xi, \qquad \dcal^{\Jcal}_\xi = \Jcal \cdot \xi.
\]
\vspace{-13mm}

\end{proof}

\subsection{Jordan frames and the Peirce decomposition}
Let $\Jcal$ be a real finite-dimensional Jordan algebra.
We define the symmetric bilinear form $\tau$ on~$\Jcal$ by
\be \label{eq:innerprod}
\tau(x, y) := \operatorname{tr} l_{\{x, y\}}.
\ee
Observe that for $x,y \in \Jcal$
\[
\tau(gx, gy) = \tau(x,y), \quad g \in \Aut(\Jcal), \qquad \tau(dx,y) + \tau(x, dy) = 0, \quad d \in \Der(\Jcal).
\]
Namely, if $g \in \Aut(\Jcal)$, then $\tau(gx, gy) = \operatorname{tr} l_{g\{x, y\}} = \operatorname{tr} g l_{\{x,y\}} g^{-1} = \tau(x,y)$, and the second identity follows as $\Der(\Jcal)$ is the Lie algebra of $\Aut(\Jcal)$.

A symmetric bilinear form $\beta$ on $\Jcal$ is called {\em associative}, if for all $x,y,z \in \Jcal$
\be \label{eq:l-adjoint}
\beta(\{x, y\}, z) = \beta(x, \{y, z\}),
\ee
i.e., if all $l_x$ are self-adjoint w.r.t.\ $\beta$. Then the following is known.

\begin{Proposition}[{\cite[p.~59]{Koecher-1999}}] \label{prop:tau-associative} The bilinear form $\tau$ from equation \eqref{eq:innerprod} is associative.
\end{Proposition}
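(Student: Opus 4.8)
The plan is to reduce the associativity of $\tau$ to the single clean fact that the linear functional $w \mapsto \operatorname{tr} l_w$ annihilates the image of every derivation. Indeed, for $d \in \Der(\Jcal)$ and $w \in \Jcal$, equation \eqref{eq:der-leftaction} gives $l_{dw} = [d, l_w]$, so
\[
\operatorname{tr} l_{dw} = \operatorname{tr}[d, l_w] = 0
\]
because the trace of a commutator vanishes. This observation is where all the work sits; everything afterwards is formal manipulation.

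Next I would feed in Lemma~\ref{structure Lie algebra}, which tells us that the particular commutator $d = [l_x, l_y]$ is a derivation of $\Jcal$. Applying the previous observation to this $d$ and unwinding $[l_x, l_y]w = \{x, \{y, w\}\} - \{y, \{x, w\}\}$, the vanishing of the trace becomes the \emph{exchange relation}
\[
\operatorname{tr} l_{\{x, \{y, w\}\}} = \operatorname{tr} l_{\{y, \{x, w\}\}},
\]
that is, $\tau(x, \{y, w\}) = \tau(y, \{x, w\})$ for all $x, y, w \in \Jcal$.

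Finally I would combine this exchange relation with the commutativity of the Jordan product and the evident symmetry of $\tau$. Using symmetry, $\tau(\{x, y\}, z) = \tau(z, \{x, y\})$; the exchange relation with the substitution $(x, y, w) \mapsto (z, x, y)$ turns this into $\tau(x, \{z, y\})$; and $\{z, y\} = \{y, z\}$ then gives $\tau(x, \{y, z\})$, which is exactly the associativity \eqref{eq:l-adjoint}. The only real care needed is bookkeeping these index substitutions so that genuine associativity, rather than some weaker permutation symmetry of $\tau$, drops out. The main conceptual content has already been absorbed into Lemma~\ref{structure Lie algebra}, which packages the relevant part of the Jordan identity into the statement that commutators of multiplications are derivations, so no direct manipulation of the linearized Jordan identity is required.
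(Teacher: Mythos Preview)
Your argument is correct. The paper does not supply its own proof of this proposition; it merely cites \cite[p.~59]{Koecher-1999} as a reference, so there is no in-paper argument to compare against. Your route---showing $\operatorname{tr} l_{dw} = \operatorname{tr}[d,l_w] = 0$ for any derivation $d$, specializing to $d = [l_x,l_y]$ via Lemma~\ref{structure Lie algebra} to obtain the exchange relation $\tau(x,\{y,w\}) = \tau(y,\{x,w\})$, and then rearranging with symmetry and commutativity---is a clean and standard one, and the bookkeeping in your final step is correct: $\tau(\{x,y\},z) = \tau(z,\{x,y\}) = \tau(x,\{z,y\}) = \tau(x,\{y,z\})$. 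The only mild caveat is that Lemma~\ref{structure Lie algebra} is itself cited from Koecher rather than proved in the paper, so your argument ultimately rests on the linearized Jordan identity hidden inside that lemma; but since the paper explicitly places Lemma~\ref{structure Lie algebra} before Proposition~\ref{prop:tau-associative}, there is no circularity in invoking it.
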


An element $c \in \Jcal$ is called an {\em idempotent }if $c^2 := \{c,c\} = c$. Such an idempotent is called {\em primitive}, if there is no decomposition $c = c_1 + c_2$ with idempotents $c_1, c_2 \neq 0$. For an idempotent $c \in \Jcal$, $l_c$ is diagonalizable with eigenvalues in $\big\{0, \frac12, 1\big\}$ \cite[Proposition III.1.2]{F-K-1994}.
Therefore, it follows that
\[ 
\tau(c, c) = \operatorname{tr} l_{\{c, c\}} = \operatorname{tr} l_c \geq 1,
\]
as the trace is the sum of the eigenvalues, and $c$ is in the $1$-eigenspace of $l_c$.

If $\Jcal$ has an identity element $\mathbf{1}_\Jcal$, then a {\em Jordan frame of $\Jcal$ }is a set $(c_i)_{i=1}^r \subset \Jcal$ of primitive idempotents such that
\be \label{eq:Jordan-frame}
\{c_i, c_j\} = \delta_{ij} c_i \qquad \text{and} \qquad c_1 + \dots + c_r = \mathbf{1}_\Jcal.
\ee
Note that
\[
\tau(c_i, c_j) = \tau(\{c_i, c_i\}, c_j) \stackrel{(\ref{eq:l-adjoint})}= \tau(c_i, \{c_i, c_j\}) = 0 \qquad \text{for $i \neq j$},
\]
so that $(c_i)_{i=1}^r$ is an $\tau$-orthogonal system. From this, one can show that the maps $l_{c_i}$ commute pairwise \cite[Lemma IV.1.3]{F-K-1994}.

Let $\cf := \sspan\{c_i\}$. Then, as all $l_{c_i}$ are diagonalizable, there is a $\tau$-orthogonal decomposition of $\Jcal$ into the common eigenspace of $l_{c_i}$, i.e., into spaces of the form
\[
\Jcal_\rho := \{ x \in \Jcal \mid l_{c}(x) = \rho(c) x, c \in \cf\}
\]
for some $\rho \in \cf^\star$. Since $c_i$ has only eigenvalues $\big\{0, \frac12, 1\big\}$ and $\rho({\bf 1}_\Jcal) = 1$, it follows that $\rho = \frac12(\theta_i + \theta_j)$, $i \leq j$, where $(\theta_i)_{i=1}^r \in \cf^\star$ is the dual basis to $(c_i)_{i=1}^r$. That is, we have the $\tau$-orthogonal eigenspace decomposition
\be \label{eq:Peirce}
\Jcal = \bigoplus_{i \leq j} \Jcal_{ij},
\ee
where $\Jcal_{ij} := \Jcal_{\frac12(\theta_i + \theta_j)}$. This is called the {\em Peirce decomposition of $\Jcal$ with respect to the Jordan frame $(c_i)_{i=1}^r$}. For convenience, we let $\Jcal_{ji} := \Jcal_{ij}$ for $i < j$.

\subsection{Semi-simple and positive Jordan algebras}

For a Jordan algebra $\Jcal$, we define the {\em radical of $\Jcal$ }as the null space of $\tau$, i.e.,
\[
{\mathfrak r}(\Jcal) := \{ a \in \Jcal \mid \tau(a,x) = 0 \text{ for all $x \in \Jcal$}\}.
\]
Evidently, ${\mathfrak r}(\Jcal) \subset \Jcal$ is an ideal by Proposition \ref{prop:tau-associative}.

We call $\Jcal$ {\em semi-simple} if ${\mathfrak r}(\Jcal) = 0$, i.e., if $\tau$ is non-degenerate. Moreover, we call $\Jcal$ {\em positive }or {\em formally real}, if $\tau$ is positive definite.

We shall now collect some known results on semi-simple and positive Jordan algebras.

\begin{Proposition} \label{prop:collect-positive-Jordan} Let $\Jcal$ be a semi-simple real Jordan algebra. Then the following hold.
\begin{enumerate}\itemsep=0pt
\item[$(1)$]
$\Jcal$ has a decomposition $\Jcal = \Jcal_1 \oplus \dots \oplus \Jcal_k$ into {\em simple }Jordan algebras $\Jcal_i$, i.e., such that~$\Jcal_i$ does not contain a non-trivial ideal {\rm \cite[Theorem III.11]{Koecher-1999}}.
\item[$(2)$]
$\Jcal$ has an identity element $\mathbf{1}_\Jcal$ {\rm \cite[Theorem III.9]{Koecher-1999}}.
\item[$(3)$] $\Jcal$ is positive iff it admits a positive definite associative bilinear form $\beta$ {\rm \cite[p.~61]{F-K-1994}}.
\item[$(4)$]
If $\Jcal$ is positive, then for every $x \in \Jcal$ there is a Jordan frame $\{c_i\}_{i=1}^r$ with $x \in \sspan(\{c_i\}_{i=1}^r)$ {\rm \cite[Theorem III.1.2]{F-K-1994}}. In particular, $\Jcal$ has Jordan frames.
\item[$(5)$]
If $\Jcal$ is simple and positive and $(c_i)_{i=1}^r$ and $(c_i')_{i=1}^r$ are Jordan frames, then there is an automorphism $h \in \Aut_0(\Jcal)$ with $h(c_i) = c_i'$ for all $i$ {\rm \cite[Theorem IV.2.5]{F-K-1994}},\footnote{In {\rm \cite{F-K-1994}} it is only stated that there exists an element $h \in \Aut(\Jcal)$ with the asserted property; however, $h \in \Aut_0(\Jcal)$ follows from the proof.} where $\Aut_0(\Jcal) \subset \Aut(\Jcal)$ is the identity component. In particular, all Jordan frames have the same number $r$ of elements, and $r$ is called the {\em rank of $\Jcal$}.
\item[$(6)$]
If $\Jcal$ is positive, then for the Peirce spaces in \eqref{eq:Peirce} we have {\rm \cite[Theorem IV.2.1]{F-K-1994}}
\[
\{\Jcal_{ij}, \Jcal_{kl}\} \subset \begin{cases} 0 & \text{if $\{i,j\} \cap \{j,k\} = \varnothing$,}\\
\Jcal_{jl} & \text{if $i=k$, $j \neq l$},\\
\Jcal_{ii} + \Jcal_{jj} & \text{if $\{i,j\} = \{k,l\}$}. \end{cases}
\]
\item[$(7)$]
If $\Jcal$ is positive, then $\Jcal_{ii} = \sspan(c_i)$ is one-dimensional for all $i$.
\item[$(8)$]
{If $\Jcal$ is positive, simple and of rank $r$, then $\Jcal_{jk} \neq 0$ for all $1\leq j\leq k\leq r$ {\rm \cite[Theorem~IV.2.3]{F-K-1994}}.}
\end{enumerate}
\end{Proposition}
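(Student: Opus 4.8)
The plan is to treat this statement as what it is, a compilation: items (1)--(6) and (8) are precisely the results cited from Koecher and Faraut--Kor\'anyi, so their ``proof'' consists of pointing to those references. The only assertion that is stated without a citation, and hence requires an independent argument, is item (7): that for a positive Jordan algebra each diagonal Peirce space $\Jcal_{ii}$ is one-dimensional. My strategy is to recognize $\Jcal_{ii}$ as a positive Jordan subalgebra in which the primitive idempotent $c_i$ plays the role of a primitive identity, and to conclude that such an algebra must have rank one and therefore dimension one.

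First I would verify that $\Jcal_{ii}$ is a Jordan subalgebra with identity $c_i$. The Peirce multiplication rules of item (6), specialized to $i=j=k=l$, give $\{\Jcal_{ii}, \Jcal_{ii}\} \subset \Jcal_{ii}$, so $\Jcal_{ii}$ is closed under the Jordan product. Since every $x \in \Jcal_{ii}$ lies in the $1$-eigenspace of $l_{c_i}$, we have $\{c_i, x\} = x$, so $c_i$ acts as the identity on $\Jcal_{ii}$. The restriction of $\tau$ to $\Jcal_{ii}$ is positive definite, being inherited from $\Jcal$, and it is associative because $\Jcal_{ii}$ is a subalgebra and $\tau$ is associative by Proposition \ref{prop:tau-associative}. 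Thus $\Jcal_{ii}$ again carries a positive-definite associative form and is a positive Jordan algebra to which items (3) and (4) apply.

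Next I would exploit the primitivity of $c_i$ to pin down the rank of $\Jcal_{ii}$. Any Jordan frame of $\Jcal_{ii}$ is a family of orthogonal primitive idempotents summing to $c_i$. If such a frame had two or more elements $e_1,\dots,e_m$, then grouping them as $c_i = e_1 + (e_2 + \dots + e_m)$ would express $c_i$ as a sum of two nonzero idempotents lying in $\Jcal_{ii} \subset \Jcal$, contradicting the primitivity of $c_i$ in $\Jcal$. Hence $\{c_i\}$ is the only Jordan frame of $\Jcal_{ii}$, so $\Jcal_{ii}$ has rank one. Applying item (4) to the positive algebra $\Jcal_{ii}$, every element lies in the span of a Jordan frame, hence in $\sspan(c_i)$; therefore $\Jcal_{ii} = \sspan(c_i)$ is one-dimensional, as claimed.

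The one point that must be handled with care is the inheritance of positivity to the subalgebra $\Jcal_{ii}$. The subtlety is that the intrinsic trace form $\tau_{\Jcal_{ii}}$ computed inside $\Jcal_{ii}$ differs in general from the restriction $\tau|_{\Jcal_{ii}}$ used above. I would resolve this by observing that admitting \emph{any} positive-definite associative form already forces positivity: if $\beta$ is such a form on a subalgebra, then $\beta(x^2,\mathbf{1}) = \beta(x,x) > 0$ for $x \neq 0$, which rules out a nonzero radical and places the subalgebra within the scope of items (3) and (4). Verifying this reconciliation carefully is the main obstacle, but it is a bookkeeping matter rather than a conceptual one, and once it is settled the argument for item (7) is complete.
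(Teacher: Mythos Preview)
Your proposal is correct and follows essentially the same route as the paper: both arguments reduce to item (7), recognize $\Jcal_{ii}$ as a subalgebra with identity $c_i$ via the Peirce relations in (6), use the restricted form $\tau|_{\Jcal_{ii}}$ together with item (3) to conclude that $\Jcal_{ii}$ is itself positive, and then invoke primitivity of $c_i$ plus the spectral theorem (4) to force $\Jcal_{ii}=\sspan(c_i)$. Your extra paragraph on the distinction between $\tau|_{\Jcal_{ii}}$ and the intrinsic trace form is a welcome clarification of a point the paper passes over silently, but it does not change the logic of the argument.
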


\begin{proof}
We only need to show point (7), as it appears not to be explicitly stated in the literature.
Note that $\Jcal_{ii}$ is a subalgebra, as $\{\Jcal_{ii}, \Jcal_{ii}\} \subset \Jcal_{ii}$ by the product relations in point (6), and by definition $c_i = {\bf 1}_{\Jcal_{ii}}$.
Since $\tau|_{\Jcal_{ii}}$ is a positive definite associative bilinear form, it follows from point (3) that $\Jcal_{ii}$ is a positive Jordan algebra as well. However, since $c_i = {\bf 1}_{\Jcal_{ii}}$ is primitive, it follows that each Jordan frame of $\Jcal_{ii}$ consists of $c_i$ only, so that by (4) each $x \in \Jcal_{ii}$ must be a~multiple of $c_i = {\bf 1}_{\Jcal_{ii}}$.
\end{proof}

The fourth of these results is called the {\em spectral theorem of positive Jordan algebras}. It shows that each $x \in \Jcal$ admits a decomposition
\be \label{eq:spectral}
x = \sum_{i=1}^r \lambda_i c_i
\ee
for a Jordan frame $(c_i)_{i=1}^r$, and the decomposition in equation \eqref{eq:spectral} is referred to as the {\em spectral decomposition of $x$}.
The $\lambda_i$'s are called the {\em spectral coefficients of $x$}.
Evidently, the tuple $(\lambda_i)_{i=1}^r$ is defined only up to permutation of the entries.
Furthermore, we call the pair $(n_+, n_-)$, where~$n_+$ and $n_-$ are the number of positive and negative spectral coefficients of $x$ the {\em spectral signature of $x$}.

\begin{Lemma} \label{lem:orbits}
Let $\Jcal$ be a semi-simple, positive Jordan algebra, $(c_i)_{i=1}^r$ a Jordan frame of $\Jcal$ and $x = \sum_i \lambda_i c_i$.
Then, it holds
\begin{gather}
l_x(\Jcal) =\bigg( \bigoplus_{\lambda_a + \lambda_b \neq 0} \Jcal_{ab} \bigg)\oplus \bigg(\bigoplus_{a, \mu} \Jcal_{a\mu}\bigg),\nonumber\\
\Der_0(\Jcal) \cdot x = \bigg(\bigoplus_{\lambda_a - \lambda_b \neq 0} \Jcal_{ab}\bigg) \oplus \bigg(\bigoplus_{a, \mu} \Jcal_{a\mu}\bigg),\nonumber\\
\g(\Jcal) \cdot x = \bigoplus_{a,i} \Jcal_{ai},\label{eq:orbit-xlambda}
\end{gather}
where we use the index convention that $i$, $j$ run over $1, \dots, r$, while $a$, $b$ run over those indices with $\lambda_a \neq 0$, and $\nu$, $\mu$ over those indices with $\lambda_\nu = 0$.
\end{Lemma}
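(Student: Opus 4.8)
\medskip

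The plan is to reduce all three identities to the Peirce decomposition \eqref{eq:Peirce}, working throughout inside $\Jcal$ itself (each of the three subspaces in the statement is the image of a linear operator, or a family of them, acting on $\Jcal$). Since $l_{c_k}$ acts on $\Jcal_{ij}$ as the scalar $\tfrac12(\delta_{ik}+\delta_{jk})$, the operator $l_x=\sum_k\lambda_k l_{c_k}$ acts on $\Jcal_{ij}$ as the scalar $\tfrac12(\lambda_i+\lambda_j)$. Hence $l_x$ is simultaneously diagonalized by the Peirce decomposition, and $l_x(\Jcal)=\bigoplus_{\lambda_i+\lambda_j\neq 0}\Jcal_{ij}$. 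Sorting the index pairs according to the stated index convention and using $\lambda_a+\lambda_\mu=\lambda_a\neq 0$ together with $\lambda_\mu+\lambda_\nu=0$ gives the first formula at once; this is the routine part.

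For the second formula I would first note that any derivation $D$ sends $c_i$ into the $\tfrac12$-eigenspace of $l_{c_i}$: differentiating $c_i=\{c_i,c_i\}$ gives $Dc_i=2l_{c_i}(Dc_i)$, so $Dc_i\in\bigoplus_{k\neq i}\Jcal_{ik}$ and therefore $\Der_0(\Jcal)\cdot x$ sits inside the off-diagonal Peirce spaces. To identify exactly which ones occur, I would write $Dc_i=\sum_{k\neq i}w_{ik}$ with $w_{ik}\in\Jcal_{ik}$ and compute the $\Jcal_{pq}$-component of $D(x)=\sum_k\lambda_k Dc_k$ for a fixed pair $p\neq q$. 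Applying $D$ to the frame relation $\{c_p,c_q\}=0$ gives $\{Dc_p,c_q\}+\{c_p,Dc_q\}=0$; each summand lands entirely in $\Jcal_{pq}$ and equals $\tfrac12 w_{pq}$ resp.\ $\tfrac12 w_{qp}$, forcing $w_{qp}=-w_{pq}$. Consequently the $\Jcal_{pq}$-component of $D(x)$ equals $(\lambda_p-\lambda_q)w_{pq}$, which vanishes whenever $\lambda_p=\lambda_q$. This yields the inclusion $\Der_0(\Jcal)\cdot x\subseteq\bigoplus_{\lambda_i\neq\lambda_j}\Jcal_{ij}$.

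For the reverse inclusion I would exhibit explicit inner derivations: for $u\in\Jcal_{ij}$ with $i\neq j$ the commutator $[l_{c_i},l_u]$ lies in $\Der_0(\Jcal)$ by Lemma \ref{structure Lie algebra}, and a short computation on the frame (using $l_{c_k}(c_m)=\delta_{km}c_k$ and the Peirce eigenvalues) gives $4[l_{c_i},l_u](c_k)=(\delta_{jk}-\delta_{ik})u$, whence $4[l_{c_i},l_u](x)=(\lambda_j-\lambda_i)u$. When $\lambda_i\neq\lambda_j$ this realizes every $u\in\Jcal_{ij}$ inside $\Der_0(\Jcal)\cdot x$, giving the opposite inclusion; rewriting $\{i\neq j:\lambda_i\neq\lambda_j\}$ in the $a,b,\mu$ convention ($\lambda_a-\lambda_\mu=\lambda_a\neq 0$, $\lambda_\mu-\lambda_\nu=0$) produces the second formula. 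The third formula then follows by combining the first two: the standard representation \eqref{eq:can-act} is surjective onto $\g(\Jcal)$ with $\phi(\Jcal)=\m_\Jcal$ and $\phi(\Der_0(\Jcal))=\Der_0(\Jcal)$, so $\g(\Jcal)\cdot x=\m_\Jcal\cdot x+\Der_0(\Jcal)\cdot x=l_x(\Jcal)+\Der_0(\Jcal)\cdot x$, the identity $\m_\Jcal\cdot x=l_x(\Jcal)$ being commutativity of the Jordan product. Adding the two right-hand sides, the $\Jcal_{a\mu}$ parts coincide and the conditions $\lambda_a+\lambda_b\neq 0$ and $\lambda_a-\lambda_b\neq 0$ together cover all pairs $(a,b)$ of nonzero indices, since they cannot vanish simultaneously; this gives $\bigoplus_{a,i}\Jcal_{ai}$.

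The main obstacle is the sharp upper bound for $\Der_0(\Jcal)\cdot x$. The bare derivation property only confines the image to the off-diagonal Peirce spaces; the extra input that turns this into the exact condition $\lambda_i\neq\lambda_j$ is the skew-symmetry $w_{qp}=-w_{pq}$ extracted from $D\{c_p,c_q\}=0$. The explicit derivations used for the reverse inclusion, together with the diagonalization of $l_x$, are then straightforward once the Peirce eigenvalues are recorded.
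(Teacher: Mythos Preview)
Your proof is correct. The first and third identities are argued exactly as in the paper (diagonalize $l_x$ on the Peirce blocks; combine $l_x(\Jcal)+\Der_0(\Jcal)\cdot x$ and note $\lambda_a\pm\lambda_b$ cannot vanish simultaneously for nonzero indices).

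The second identity is where you and the paper diverge. The paper works directly with the spanning set of $\Der_0(\Jcal)$: it computes $[l_{x_{ij}},l_{y_{kl}}](x)=\tfrac12(\lambda_k+\lambda_l-\lambda_i-\lambda_j)\{x_{ij},y_{kl}\}$ and then appeals to the Peirce multiplication table (Proposition~\ref{prop:collect-positive-Jordan}\,(6)) to see that all resulting components land in $\Jcal_{pq}$ with $\lambda_p\neq\lambda_q$; for the converse it uses the single inner derivation $[l_x,l_{x_{ij}}]$, which sends $x$ to $-\tfrac14(\lambda_i-\lambda_j)^2 x_{ij}$. Your approach instead exploits the derivation property abstractly on the frame relations $c_i=\{c_i,c_i\}$ and $\{c_p,c_q\}=0$ to force $Dc_i\in\bigoplus_{k\neq i}\Jcal_{ik}$ and the skew-symmetry $w_{qp}=-w_{pq}$, from which the $\Jcal_{pq}$-component of $D(x)$ is manifestly $(\lambda_p-\lambda_q)w_{pq}$; for the converse you use $[l_{c_i},l_u]$ in place of $[l_x,l_u]$. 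Your route has the mild advantage that the upper bound holds for \emph{all} derivations, not just inner ones, and it avoids the case analysis on $\{\Jcal_{ij},\Jcal_{kl}\}$; the paper's route stays closer to the explicit generators of $\Der_0(\Jcal)$ and yields the coefficient $-(\lambda_i-\lambda_j)^2/4$ in one line. Both are perfectly clean.
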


\begin{proof}
By point (7) in Proposition \ref{prop:collect-positive-Jordan}, the Peirce decomposition in equation \eqref{eq:Peirce} reads
\[
\Jcal = \cf \oplus \bigoplus_{a<b} \Jcal_{ab} \oplus \bigoplus_{a, \mu} \Jcal_{a\mu} \oplus \bigoplus_{\mu<\nu} \Jcal_{\mu\nu}, \qquad \cf := \sspan\{c_i\}.
\]
As $l_x(x_{ij}) = \frac12(\lambda_i + \lambda_j) x_{ij}$ for $x_{ij} \in \Jcal_{ij}$, the first equality in equation \eqref{eq:orbit-xlambda} is immediate.

For the second equality, recall that $\Der_0(\Jcal)$ is spanned by $[l_{x_{ij}}, l_{y_{kl}}]$ for $x_{ij}, y_{ij} \in \Jcal_{ij}$, and we compute
\begin{align*}
{}[l_{x_{ij}}, l_{y_{kl}}](x) & = \left\{x_{ij}, \frac12 (\lambda_k + \lambda_l) y_{kl}\right\} - \left\{y_{kl}, \frac12(\lambda_i + \lambda_j) x_{ij}\right\}\\
& = \frac12(\lambda_k + \lambda_l - \lambda_i - \lambda_j) \{x_{ij}, y_{kl}\}.
\end{align*}
Therefore, the relation ``$\subset$'' in the second equality in equation \eqref{eq:orbit-xlambda} follows easily from the bracket relation of the Peirce spaces in point (6) of Proposition \ref{prop:collect-positive-Jordan}.
For the converse inclusion, we compute
\begin{gather*}
 [l_x, l_{x_{ij}}](x)   = \{x, \{x_{ij}, x\}\} - \big\{x_{ij}, x^2\big\}
  = \frac14(\lambda_i + \lambda_j)^2 x_{ij} - \frac12\big(\lambda_i^2 + \lambda_j^2\big) x_{ij} \\
  \hphantom{[l_x, l_{x_{ij}}](x)}{}
  = - \frac14(\lambda_i - \lambda_j)^2 x_{ij}.
\end{gather*}

The third equation then follows as $\g(\Jcal) \cdot x = l_x(\Jcal) + \Der_0(\Jcal) \cdot x$, and $\lambda_a + \lambda_b = \lambda_a - \lambda_b = 0$ cannot both hold for $\lambda_a, \lambda_b \neq 0$.
\end{proof}

\begin{Theorem} \label{thm:orbits}
For a positive simple Jordan algebra $\Jcal$, the following hold:
\begin{enumerate}\itemsep=0pt
\item[$(1)$] The orbits of $\Aut_0(\Jcal)$ are the sets of elements with equal spectral coefficients.
\item[$(2)$] The orbits of the structure group $\G(\Jcal)$ consist of all elements with equal spectral signature.
\end{enumerate}
\end{Theorem}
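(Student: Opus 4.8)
The plan is to establish both statements by analyzing the action on spectral decompositions, using the spectral theorem (point (4) of Proposition~\ref{prop:collect-positive-Jordan}) together with the orbit computation in Lemma~\ref{lem:orbits}. The guiding principle is that the automorphism group $\Aut_0(\Jcal)$ acts by isometries of $\tau$ and permutes Jordan frames, hence preserves spectral coefficients, whereas the larger structure group $\G(\Jcal)$ contains the non-isometric maps $l_x$ that can rescale eigenvalues and therefore only preserves the coarser invariant of the spectral signature $(n_+, n_-)$.

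\textbf{Part (1): orbits of $\Aut_0(\Jcal)$.}
First I would show that $\Aut_0(\Jcal)$-orbits are contained in the level sets of the spectral coefficients. If $g \in \Aut(\Jcal)$ and $x = \sum_i \lambda_i c_i$ is a spectral decomposition, then $g(x) = \sum_i \lambda_i g(c_i)$, and since $g$ is an algebra automorphism it carries the Jordan frame $(c_i)$ to another Jordan frame $(g(c_i))$ (idempotency and the orthogonality relations \eqref{eq:Jordan-frame} are preserved). Thus $g(x)$ has the \emph{same} spectral coefficients $(\lambda_i)$, only attached to a different frame. For the converse inclusion, suppose $x = \sum_i \lambda_i c_i$ and $x' = \sum_i \lambda_i c_i'$ have equal spectral coefficients with respect to frames $(c_i)$ and $(c_i')$. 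By point (5) of Proposition~\ref{prop:collect-positive-Jordan} there is $h \in \Aut_0(\Jcal)$ with $h(c_i) = c_i'$ for all $i$, whence $h(x) = \sum_i \lambda_i h(c_i) = \sum_i \lambda_i c_i' = x'$. This shows $x$ and $x'$ lie on the same $\Aut_0(\Jcal)$-orbit, completing Part (1).

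\textbf{Part (2): orbits of $\G(\Jcal)$.}
For the structure group I would argue in two directions. Since $\Aut_0(\Jcal) \subset \G(\Jcal)$ and the latter is connected, it suffices to understand how the extra generators $l_x$ move the spectral data. The key infinitesimal input is Lemma~\ref{lem:orbits}: the tangent space to the $\G(\Jcal)$-orbit at $x = \sum_i \lambda_i c_i$ is $\g(\Jcal)\cdot x = \bigoplus_{a,i} \Jcal_{ai}$, where $a$ ranges over indices with $\lambda_a \neq 0$. The directions \emph{not} in this tangent space are precisely $\bigoplus_{\mu \leq \nu}\Jcal_{\mu\nu}$ with $\lambda_\mu = \lambda_\nu = 0$, together with the diagonal directions $\sspan\{c_a\}$ that merely rescale the nonzero $\lambda_a$. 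This means that along the orbit one can freely move within the nonzero part while the number of zero spectral coefficients stays fixed, and — crucially — the nonzero coefficients may be rescaled but cannot change sign without passing through zero, which is forbidden since the zero-locus has the complementary tangent directions excluded. I would make this precise by showing that $l_{c_a}$ acting via the one-parameter group $\exp(t\,l_{c_a})$ rescales the $a$-th coefficient by $\mathrm{e}^{t}$ (positive factor, sign preserved) while fixing the others, and that the mixed Peirce components allow one to rotate any two frame vectors of equal sign into each other. Combining these, the connected orbit through $x$ reaches exactly those elements with the same number $n_+$ of positive and $n_-$ of negative coefficients, i.e.\ the same spectral signature; conversely, since the signature is a $\G(\Jcal)$-invariant (it is the dimension of the maximal positive/negative subspaces for the induced form, continuous and integer-valued along the connected group), distinct signatures give distinct orbits.

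\textbf{Main obstacle.}
The delicate point is the \emph{global} surjectivity in Part (2): translating the infinitesimal tangent-space description of Lemma~\ref{lem:orbits} into the statement that the full connected orbit is exactly one signature class. One must verify that rescalings via $\exp(t\,l_{c_a})$ can drive each nonzero $|\lambda_a|$ to any prescribed positive value (so coefficients of the same sign are identified) while genuinely preventing any sign change — the latter following from the fact that the zero set $\{\lambda_a = 0\}$ is never crossed because the corresponding Peirce block $\Jcal_{\mu\nu}$ with both indices in the kernel lies outside the orbit tangent space. Carefully combining the diagonal rescaling with the off-diagonal Peirce mixing to achieve a clean normal form (say $\sum_{a} \operatorname{sgn}(\lambda_a)\, c_a$ for the nonzero part) is where the real work lies, and is the step I expect to require the most care.
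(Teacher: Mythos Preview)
Your Part~(1) and the transitivity half of Part~(2) match the paper's proof: automorphisms send Jordan frames to Jordan frames, point~(5) of Proposition~\ref{prop:collect-positive-Jordan} gives the converse, and the diagonal one-parameter groups $\exp(t\,l_{c_a})$ rescale each nonzero $\lambda_a$ by $e^t$, so that after an automorphism any element of signature $(n_+,n_-)$ is driven to a fixed normal form. This is exactly what the paper does via the map $\Phi_x$ and equation~\eqref{eq:action-diag}.

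The gap is in your \emph{invariance} direction, and you have misidentified where the difficulty lies: it is not the global surjectivity but the claim that $\G(\Jcal)$ preserves each $\Sigma_{n_+,n_-}$. Your argument that ``the zero set $\{\lambda_a=0\}$ is never crossed because the block $\Jcal_{\mu\nu}$ with both indices null lies outside the orbit tangent space'' is a non sequitur: Lemma~\ref{lem:orbits} tells you which directions the orbit moves in at a single point, but as you flow along the orbit the Jordan frame itself changes, and nothing in your sketch controls the spectral coefficients of nearby points. Your parenthetical fallback (``the dimension of the maximal positive/negative subspaces for the induced form'') is too vague to be a proof---you never say which form, nor why it transforms $\G(\Jcal)$-equivariantly. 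A repaired version of your approach would argue that, by Lemma~\ref{lem:orbits} together with the constancy of $\dim\Jcal_{ij}$ for $i<j$ (a consequence of frame transitivity), the orbit dimension is a strictly increasing function of the rank $m=n_++n_-$; since orbits have constant dimension, $m$ is an orbit invariant, and then continuity of the roots of $f(x,\lambda)$ forces the signs of the nonzero $\lambda_a$ to persist along the connected orbit. The paper instead avoids dimension counts entirely: it realises $\Sigma^{\leq m}$ as the zero locus of the coefficient polynomials $a_0,\dots,a_{r-m-1}$ of the generic minimal polynomial, uses a real-analyticity argument in $t$ to pass from local to global $\G(\Jcal)$-invariance of $\Sigma^{\leq m}$, and then shows each $\Sigma_{n_+,n_-}$ is relatively closed in $\Sigma^m$ via compactness of $\Aut_0(\Jcal)$.
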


\begin{proof}
Any automorphism maps (primitive) idempotents to (primitive) idempotents and fixes ${\bf 1}_\Jcal$, whence it maps Jordan frames to Jordan frames. Thus, if $x = \sum_{i=1}^r \lambda_i c_i$ for a Jordan frame~$(c_i)_{i=1}^r$, it follows that for $h \in \Aut_0(\Jcal)$
\[
h(x) = \sum_{i=1}^r \lambda_i h(c_i),
\]
and $(h(c_i))_{i=1}^r$ is again a Jordan frame, so that $x$, $h(x)$ have the same spectral coefficients $(\lambda_i)_{i=1}^r$.

Conversely, if $x$, $y$ have the same spectral coefficients $(\lambda_i)_{i=1}^r$, then
\[
x = \sum_{i=1}^r \lambda_i c_i, \qquad y = \sum_{i=1}^r \lambda_i c_i'
\]
for Jordan frames $(c_i)_{i=1}^r$ and $(c_i')_{i=1}^r$. Thus, by point (5) of Proposition \ref{prop:collect-positive-Jordan}, there is a $h \in \Aut_0(\Jcal)$ with $h(c_i) = c_i'$ and hence, $h(x) = y$. This shows the first statement.

Concerning the second statement, we define the following subsets of $\Jcal$:
\begin{gather*}
\Sigma^m := \{x \in \Jcal \mid \text{$m$ spectral coefficients of $x$ are $\neq 0$}\},\\
\Sigma^{\leq m} := \{x \in \Jcal \mid \text{{\em at most} $m$ spectral coefficients of $x$ are $\neq 0$}\},\\
\Sigma_{n_+, n_-} := \{x \in \Jcal \mid \text{$x$ has spectral signature $(n_+, n_-)$}\}.
\end{gather*}

Evidently,
\be \label{eq:decomp-Sigma_m}
\Sigma^m = \dot \bigcup_{n_++n_- = m} \Sigma_{n_+,n_-}.
\ee
Moreover, by the first assertion, all these sets are $\Aut_0(\Jcal)$-invariant.

There are continuous (in fact, polynomial) functions $a_k\colon \Jcal \to \R$ such that
\be \label{eq:min-pol}
f(x, \lambda) = \lambda^r + \lambda^{r-1} a_{r-1}(x) + \dots + a_0(x)
\ee
is the minimal polynomial of all {\em generic }$x \in \Jcal$, i.e., elements with pairwise distinct spectral coefficients $\lambda_i(x)$ \cite[Proposition II.2.1]{F-K-1994}.
If $x = \sum_i \lambda_i(x) c_i$ is the spectral decomposition of $x$, then $\prod_i(x - \lambda_i(x) x) = 0$ by equation \eqref{eq:Jordan-frame}, and as the roots $\lambda_i(x)$ are pairwise distinct, it follows that
\be \label{eq:min-pol-prod}
f(x, \lambda) = \prod_{i=1}^r (\lambda - \lambda_i(x)),
\ee
and as generic $x$'s are dense in $\Jcal$ \cite[Proposition II.2.1]{F-K-1994}, it follows that equation \eqref{eq:min-pol-prod} holds for {\em any }$x \in \Jcal$. Then $\Sigma^{\leq m}$ are those elements where $\lambda = 0$ is a root of $f(x, \cdot)$ of multiplicity~$\geq r-m$; that is,
\be \label{Sigma-minimal}
\Sigma^{\leq m} = \{x \in \Jcal \mid a_0(x) = \dots = a_{r-m-1}(x) = 0\} \subset \Jcal.
\ee
As the spectral coefficients of $x$ are unchanged under the automorphism group, equation \eqref{eq:min-pol} and equation \eqref{eq:min-pol-prod} imply
\be \label{eq:a_k-invariant}
a_k(h \cdot x) = a_k(x), \qquad x \in \Jcal, \quad h \in \Aut_0(\Jcal).
\ee

We assert that $\Sigma^{\leq m}$ is invariant under $\G(\Jcal)$. For this, fix a Jordan frame $(c_i)_{i=1}^r$ and let~$x = \lambda_a c_a \in \cf \cap \Sigma^m$, using the index summation convention from Lemma \ref{lem:orbits}. Define the map
\[
\Phi_x\colon\ \Aut_0(\Jcal) \times \R^m \longrightarrow \G(\Jcal) \cdot x, \qquad (h, (t_i)) \mapsto h \cdot \exp(l_{t_i c_i}) \cdot x.
\]
Since $l_{t_i c_i}^k x = t_a^k \lambda_a c_a$ by equation \eqref{eq:Jordan-frame}, it follows that
\be \label{eq:action-diag}
\Phi_x(h, (t_i)) = h \cdot \exp(l_{t_i c_i}) \cdot x = h \cdot \big(e^{t_a} \lambda_a c_a\big),
\ee
and as $e^{t_a} \lambda_a c_a \in \Sigma^m$, the $\Aut_0(\Jcal)$-invariance of $\Sigma^m$ implies that $\operatorname{Im}(\Phi_x) \subset \Sigma^m$. Moreover, it follows that the image of the differential $d_{(e,0)} \Phi_x$ is
\[ 
\operatorname{Im} d_{(e,0)} \Phi_x = \sspan\{c_a\} \oplus \Der_0(\Jcal) \cdot x \stackrel{(\ref{eq:orbit-xlambda})}\subset \g(\Jcal) \cdot x = T_x(\G(\Jcal) \cdot x).
\]
In fact, equation \eqref{eq:orbit-xlambda} implies that $\operatorname{Im} d_{(e,0)} \Phi_x = T_x(\G(\Jcal) \cdot x)$ if $x = \lambda_a c_a \in \cf$ is {\em generic in $\Sigma^m$}, that is, if $\lambda_a \neq \lambda_b$ for all $a \neq b$.

This implies that for $x \in \cf \cap \Sigma^m$ generic, there is an open neighborhood $U \subset \G(\Jcal)$ of the identity such that
\[ 
U \cdot x \subset \operatorname{Im}(\Phi_x) \subset \Sigma^m.
\]
Thus, for $X \in \g(\Jcal)$ and $x \in \cf \cap \Sigma^m$ generic, equation \eqref{Sigma-minimal} implies
\be \label{eq:a_k-in-t}
a_k(\exp(tX) \cdot x) = 0, \qquad k = 0, \dots, r-m-1,
\ee
for $|t|$ small enough such that $\exp(tX) \in U$. As all $a_k$ are polynomials, the expressions in equation~\eqref{eq:a_k-in-t} are real analytic in $t$, whence their vanishing for $|t|$ small implies that they vanish for {\em all}~$t \in \R$, in particular for $t=1$. That is, we conclude that
\be \label{eq:a_k-in-1}
a_k(\exp(X) \cdot x) = 0, \qquad k = 0, \dots, r-m-1, \qquad X \in \g(\Jcal)
\ee
for $x \in \cf \cap \Sigma^m$ generic, and taking the closure, it follows that equation \eqref{eq:a_k-in-1} holds for {\em all}~${x \in \cf \cap \Sigma^{\leq m}}$.
Moreover, by the first part, each $x \in \Sigma^{\leq m}$ can be written as $x = h \cdot \tilde x$ for~$\tilde x \in \cf \cap \Sigma^{\leq m}$ and $h \in Aut_0(\Jcal)$. Thus, it holds
\[
a_k(\exp(X) \cdot x) = a_k( h \cdot \operatorname{Ad}_{h^{-1}}(X) \tilde x) \stackrel{(\ref{eq:a_k-invariant})}= a_k(\operatorname{Ad}_{h^{-1}}(X) \cdot \tilde x) \stackrel{(\ref{eq:a_k-in-1})} = 0,
\]
so that equation \eqref{eq:a_k-in-1} holds for all $x \in \Sigma^{\leq m}$ and $X \in \g(\Jcal)$.
Thus, by equation \eqref{Sigma-minimal} it follows that
\[
\exp(\g(\Jcal)) \cdot \Sigma^{\leq m} \subset \Sigma^{\leq m},
\]
and as the connected group $\G(\Jcal)$ is generated by $\exp(\g(\Jcal))$, the asserted $\G(\Jcal)$-invariance of~$\Sigma^{\leq m}$ follows.

Since $\Sigma^m = \Sigma^{\leq m} \backslash \Sigma^{\leq m-1}$ is the difference of two $\G(\Jcal)$-invariant sets, it follows that $\Sigma^m$ is $\G(\Jcal)$-invariant as well.

Next, we assert that $\Sigma_{n_+, n_-} \subset \Sigma^m$ is relatively closed. For if $(x_k)_{k \in \N} \in \Sigma_{n_+, n_-}$ converges to~$x_0 \in \Sigma^m$, then, fixing a Jordan frame $(c_i)_{i=1}^r$, we find $h_k \in \Aut_0(\Jcal)$ such that
\[
y_k := h_k \cdot x_k = \sum_a \lambda_{a,k} c_a, \qquad \lambda_{1,k} \geq \dots \geq \lambda_{m,k}.
\]
Since $y_k \in \Sigma_{n_+,n_-}$ as well, it follows that the signs of $0 \neq \lambda_{a,k}$ are equal for all $k$. As $\Aut_0(\Jcal)$ is compact, we may pass to a subsequence to assume that $h_k \to h_0$, whence $y_k \to h_0 x_0$, i.e.,
\[
h_0 x_0 = \sum_a \lambda_{a, 0} c_a, \qquad \lambda_{a,0} = \lim_{k\to\infty} \lambda_{a,k}.
\]
Since $x_0$ and hence $h_0 x_0 \in \Sigma^m$, it follows that $\lambda_{a,0} \neq 0$ for all $a$, whence $\lambda_{a,0}$ has the same sign as all $\lambda_{a,k}$, so that $h_0 x_0 \in \Sigma_{n_+, n_-}$, i.e., $x_0 \in \Sigma_{n_+, n_-}$.

Thus, equation \eqref{eq:decomp-Sigma_m} is the disjoint decomposition of $\Sigma^m$ into finitely many relatively closed subsets, and since $\G(\Jcal)$ and hence all orbits are connected, it follows that each $\G(\Jcal)$-orbit must be contained in some $\Sigma_{n_+,n_-}$.

On the other hand, as elements with equal spectral coefficients lie in the same $\Aut_0(\Jcal)$-orbit, equation \eqref{eq:action-diag} immediately implies that $\G(\Jcal)$ acts transitively on $\Sigma_{n_+,n_-}$, which completes the proof.
\end{proof}

For a positive Jordan algebra $\Jcal$ we identify $\Jcal$ and $\Jcal^\star$ by the isomorphism
\begin{align*}
&\flat\colon\ \Jcal \longrightarrow \Jcal^\star, \qquad x \longmapsto x^\flat := \tau(x, \cdot),\\
&\#\colon\ \Jcal^\star \longrightarrow \Jcal, \qquad \# := \flat^{-1}.
\end{align*}

By the spectral theorem (cf.\ point (4) of Proposition \ref{prop:collect-positive-Jordan}), for each $\xi \in \Jcal^\star$ there is a Jordan frame $(c_i)_{i=1}^r$ on $\Jcal$ such that
\[
\xi^\# = \lambda_i c_i,\qquad \mbox{ and } \qquad \xi = \lambda_i c_i^\flat,
\]
and we define the spectral coefficients $(\lambda_i)_i$ and the spectral signature $(n_+,n_-)$ of $\xi$ to be the spectral coefficients and signature of $\xi^\# $. We let
\[
\ocal_{n_+,n_-} \subset \Jcal^\star
\]
be the set of elements of spectral signature $(n_+,n_-)$. Furthermore, we define the {\em dual of $\tau$ } to be the scalar product on $\Jcal^\star$ given by
\[ 
\tau^\flat (\eta_1, \eta_2) := \tau\big(\eta_1^\#, \eta_2^\#\big) \qquad \mathrm{or} \qquad \tau^\flat \big(x_1^\flat, x_2^\flat\big) := \tau(x_1, x_2).
\]

For $x,y \in \Jcal$ and $\xi \in \Jcal^\star$, we have $(l_x^\ast \xi)(y) = \xi (l_x y) = \tau\big(\xi^{\#}, l_x y\big) = \tau\big(l_x \xi^{\#}, y\big) = \big(l_x \xi^{\#}\big)^\flat (y)$, so that
\be \label{eq:l*dual}
l_x^\ast \xi = \big(l_x \xi^{\#}\big)^\flat.
\ee
Therefore, by the definition of the dual action,
it follows that
\be \label{eq:dual-orbit}
\Aut_0(\Jcal) \cdot \xi = \big(\Aut_0(\Jcal) \cdot \xi^{\#}\big)^\flat, \qquad \G(\Jcal) \cdot \xi = \big(\G(\Jcal) \cdot \xi^{\#}\big)^\flat,
\ee
so that, by Theorem \ref{thm:orbits}, we obtain that the orbits of the action of $\G(\Jcal)$ on $\Jcal^\star$ are the sets~$\ocal_{n_+,n_-}$.

The {\em open cone of squares in $\Jcal$ }is
\[
\Omega_\Jcal := \mathrm{Int} \big\{x^2 \mid x \in \Jcal\big\}.
\]
Looking at the spectral decomposition in equation \eqref{eq:spectral}, it follows that $x \in \Omega_\Jcal$ iff all its spectral coefficients are positive iff $l_x$ is positive definite, and the latter description shows that~$\Omega_\Jcal$ is indeed a convex cone; in fact, it easily follows from this characterization that
\begin{gather} \Omega_\Jcal = \ocal_{r,0} = \g(\Jcal) \cdot {\bf 1}_\Jcal,\qquad
\overline{\Omega}_\Jcal = \dot \bigcup_{n_+ \geq 0} \ocal_{n_+,0}.\label{eq:positive-orbits}
\end{gather}

\begin{Theorem} \label{thm:main}
Let $\Jcal$ be a positive, simple Jordan algebra with structure group $G(\Jcal) \subset \Gl(\Jcal)$.
Then $\xi \in \Jcal^\star$ is $\m_\Jcal$-regular iff the spectral coefficients $(\lambda_i)$ of $\xi$ satisfy:
\be \label{eq:form-regular}
\lambda_a + \lambda_b \neq 0 \qquad \text{whenever $\lambda_a, \lambda_b \neq 0$}.
\ee
In particular, the $\G(\Jcal)$-orbit $\ocal_{n_+,n_-}$ is $\m_\Jcal$-regular iff $n_+ = 0$ or $n_-=0$, i.e., iff it is contained in $\overline{\Omega}_\Jcal$ or $-\overline{\Omega}_\Jcal$.
\end{Theorem}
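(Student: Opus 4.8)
The plan is to transport the whole question from $\Jcal^\star$ back to $\Jcal$ via the identification $\#\colon \Jcal^\star \to \Jcal$, where everything becomes an explicit comparison of two Peirce-type subspaces that are already computed in Lemma \ref{lem:orbits}. Set $x := \xi^\#$. By equation \eqref{eq:l*dual} we have $(l_y^\ast \xi)^\# = l_y\, x = l_x\, y$ for all $y \in \Jcal$, so under $\#$ the $\Jcal$-dual distribution $\hh^\Jcal_\xi = \dcal^{\m_\Jcal}_\xi = \{l_y^\ast \xi \mid y \in \Jcal\}$ corresponds exactly to $l_x(\Jcal)$. Likewise, by equation \eqref{eq:dual-orbit} the orbit $\G(\Jcal)\cdot\xi$ is the $\flat$-image of $\G(\Jcal)\cdot x$, so its tangent space $T_\xi\ocal$ corresponds under $\#$ to $\g(\Jcal)\cdot x$. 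Thus $\xi$ is $\m_\Jcal$-regular, i.e.\ $\dcal^{\m_\Jcal}_\xi = T_\xi\ocal$ in the sense of Definition \ref{def:regular}, precisely when $l_x(\Jcal) = \g(\Jcal)\cdot x$.

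Next I would read off both subspaces from Lemma \ref{lem:orbits} with $x = \sum_i \lambda_i c_i$. Splitting $\g(\Jcal)\cdot x = \bigoplus_{a,i}\Jcal_{ai}$ according to whether the second index carries a vanishing coefficient shows that $\g(\Jcal)\cdot x$ and $l_x(\Jcal)$ share all mixed blocks $\Jcal_{a\mu}$ and all blocks $\Jcal_{ab}$ with $\lambda_a+\lambda_b \neq 0$, and differ exactly by the blocks $\Jcal_{ab}$ with $\lambda_a,\lambda_b \neq 0$ and $\lambda_a+\lambda_b = 0$. This is where simplicity enters decisively: by point (8) of Proposition \ref{prop:collect-positive-Jordan} every Peirce space $\Jcal_{ab}$ is nonzero, so any such offending block is a genuine gap. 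Hence $l_x(\Jcal) = \g(\Jcal)\cdot x$ if and only if no pair of nonzero spectral coefficients sums to zero, which is precisely condition \eqref{eq:form-regular}.

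For the orbit-level statement I would recall from Definition \ref{def:regular-A} that $\ocal_{n_+,n_-}$ is $\m_\Jcal$-regular exactly when every one of its points is regular, and that by Theorem \ref{thm:orbits} together with equation \eqref{eq:dual-orbit} this orbit consists of all $\xi$ of fixed spectral signature $(n_+,n_-)$. If $n_+ = 0$ or $n_- = 0$, every such $\xi$ has all nonzero spectral coefficients of a single sign, so no canceling pair can occur and \eqref{eq:form-regular} holds at each point. Conversely, if $n_+ \geq 1$ and $n_- \geq 1$, the orbit contains a point with a canceling pair $\lambda_a = 1$, $\lambda_b = -1$ (the remaining entries chosen to realize the signature), which violates \eqref{eq:form-regular} and is therefore non-regular; so the orbit is not regular. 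The final equivalence with containment in $\overline{\Omega}_\Jcal$ or $-\overline{\Omega}_\Jcal$ then follows immediately from equation \eqref{eq:positive-orbits}.

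The subspace bookkeeping is entirely routine once the correspondence through $\#$ is set up. The two points that genuinely require attention are, first, that simplicity (via the nonvanishing of every Peirce block) is exactly what makes the discrepancy between $l_x(\Jcal)$ and $\g(\Jcal)\cdot x$ detectable, so that the criterion is sharp rather than merely sufficient; and second, the passage from the pointwise condition to the orbit condition, where one must exhibit, for every mixed signature, a point of that signature actually carrying a canceling pair. I expect the latter quantifier subtlety, rather than any computation, to be the only place a careless argument could go wrong.
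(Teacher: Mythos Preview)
Your proof is correct and follows essentially the same approach as the paper: transport to $\Jcal$ via $\#$, invoke the Peirce-block computations of Lemma~\ref{lem:orbits}, use simplicity through point~(8) of Proposition~\ref{prop:collect-positive-Jordan} to make the criterion sharp, and handle the orbit-level statement exactly as you describe. The only cosmetic difference is that the paper routes the regularity condition through Proposition~\ref{prop:Jordan-integrable} (comparing $\Der_0(\Jcal)\cdot x$ with $l_x(\Jcal)$), whereas you apply Definition~\ref{def:regular} directly (comparing $l_x(\Jcal)$ with $\g(\Jcal)\cdot x$); both comparisons reduce to the same missing Peirce blocks, and your version is arguably slightly more direct.
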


\begin{proof}
Let $x := \xi^\# \in \Jcal$.
By equation \eqref{eq:dual-orbit}, $\Aut_0(\Jcal) \cdot \xi \subset \Jcal \cdot \xi$ iff $\Aut_0(\Jcal) \cdot x \subset \Jcal \cdot x = l_x(\Jcal)$, and, {recalling point (8) in Proposition \ref{prop:collect-positive-Jordan}}, by equation~\eqref{eq:orbit-xlambda} this condition is satisfied iff equation~\eqref{eq:form-regular} holds.
Recalling Proposition \ref{prop:Jordan-integrable}, the first statement follows.

If $n_+, n_- > 0$, then evidently, $\ocal_{n_+, n_-}$ contains elements two of whose spectral coefficients satisfy $\lambda_a = - \lambda_b \neq 0$, so that $\ocal_{n_+, n_-}$ is not $\m_\Jcal$-regular.

On the other hand, on $\ocal_{n_+, 0}$ ($\ocal_{0, n_-}$, respectively) $\lambda_a, \lambda_b > 0$ ($< 0$, respectively) so that equation \eqref{eq:form-regular} holds; whence $\ocal_{n_+, 0}$ and $\ocal_{0, n_-}$ are the only $\m_\Jcal$-regular orbits, and by equation~\eqref{eq:positive-orbits} these are the orbits contained in $\bar \Omega_\Jcal$ or $-\bar \Omega_\Jcal$, respectively.
\end{proof}

Let us now describe the pseudo-Riemannian metric $\Gcal$ on $\ocal^{\rm reg}_{\m_{\Jcal}}$.
Take $\xi\in\Jcal^{\star}$ with spectral decomposition
\be \label{eq:define-xi}
\xi = \lambda_a c_a^\flat \in \Jcal^\star \quad \Rightarrow \quad x := \xi^\# = \lambda_a c_a \in \Jcal
\ee
for some Jordan frame $(c_i)_{i=1}^r$, and assume it satisfies equation \eqref{eq:form-regular}.
Then, it holds
\[
T_\xi \ocal = \mathfrak{g}(\Jcal) \cdot \xi = (\mathfrak{g}(\Jcal) \cdot x)^\flat \stackrel{(\ref{eq:orbit-xlambda})}= \bigoplus_{a,i} \Jcal_{ai}^\flat,
\]
and we have the following Proposition.

\begin{Proposition} \label{prop:metric-Jordan}
Let $\xi = \lambda_a c_a^\flat \in \Jcal^\star$ be as above.
Then, it holds
\be \label{eq:G-xi}
\Gcal_\xi = \sum_{a,i} \frac2{\lambda_a + \lambda_i} \tau^\flat|_{\Jcal_{ai}^\flat},
\ee
which is equivalent to
\[
\Gcal_\xi\big(x_{ai}^\flat, y_{bj}^\flat\big) = \begin{cases} \dfrac2{\lambda_a + \lambda_i} \tau^\flat\big(x_{ai}^\flat, y_{ai}^\flat\big) & \text{if $(a,i) = (b,j)$,}\\[1mm] 0 & \text{else.} \end{cases}
\]
\end{Proposition}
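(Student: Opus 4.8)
The plan is to evaluate the bilinear form $\Gcal_\xi$ of Proposition~\ref{prop:inner-product} directly on the spanning set of $\hh^\Jcal_\xi = T_\xi\ocal = \bigoplus_{a,i}\Jcal_{ai}^\flat$ furnished by the Peirce blocks. Since Proposition~\ref{prop:inner-product} already guarantees that $\Gcal_\xi$ is a well-defined (and non-degenerate) form on the whole distribution $\hh^\Jcal_\xi$, it suffices to compute $\Gcal_\xi\big(x_{ai}^\flat, y_{bj}^\flat\big)$ on Peirce elements $x_{ai}\in\Jcal_{ai}$, $y_{bj}\in\Jcal_{bj}$, where (in the index convention of Lemma~\ref{lem:orbits}) the indices $a,b$ have $\lambda_a,\lambda_b\neq 0$ while $i,j$ are arbitrary. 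The only real work is to exhibit one convenient preimage of each tangent vector under $l_\bullet^\ast(\xi)$ and to invoke $\tau$-orthogonality.

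First I would recall, exactly as in the proof of Lemma~\ref{lem:orbits}, that $x=\xi^\# = \lambda_a c_a$ acts on $\Jcal_{ai}$ as the scalar $\tfrac12(\lambda_a+\lambda_i)$, i.e.\ $l_x|_{\Jcal_{ai}} = \tfrac12(\lambda_a+\lambda_i)\,\mathrm{Id}$. The regularity hypothesis \eqref{eq:form-regular} together with $\lambda_a\neq 0$ ensures $\lambda_a+\lambda_i\neq 0$ for every admissible pair (if $\lambda_i\neq 0$ this is \eqref{eq:form-regular}, and if $\lambda_i=0$ it reduces to $\lambda_a\neq 0$), so $l_x$ is invertible on each such $\Jcal_{ai}$. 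Hence, setting $u := \tfrac{2}{\lambda_a+\lambda_i}x_{ai}$, we get $l_x u = x_{ai}$, and by equation \eqref{eq:l*dual} together with commutativity of the product the tangent vector is realised as
\[
x_{ai}^\flat = (l_x u)^\flat = (l_u x)^\flat = l_u^\ast(\xi).
\]
Similarly $y_{bj}^\flat = l_v^\ast(\xi)$ with $v := \tfrac{2}{\lambda_b+\lambda_j}y_{bj}$.

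Next I would substitute these representatives into the defining formula \eqref{eq:inner-productA} and use $\xi = x^\flat = \tau(x,\cdot)$ to obtain
\[
\Gcal_\xi\big(x_{ai}^\flat, y_{bj}^\flat\big) = \langle \xi, \{u,v\}\rangle = \tau\big(x, \{u,v\}\big).
\]
Applying associativity of $\tau$ (equation \eqref{eq:l-adjoint}) with $a=x,b=u,c=v$, and then $l_x u = x_{ai}$, this becomes
\[
\tau\big(\{x,u\}, v\big) = \tau\big(x_{ai}, v\big) = \frac{2}{\lambda_b+\lambda_j}\,\tau\big(x_{ai}, y_{bj}\big).
\]
Since the Peirce decomposition \eqref{eq:Peirce} is $\tau$-orthogonal, $\tau(x_{ai},y_{bj})$ vanishes unless the two Peirce blocks coincide; in the diagonal case $(a,i)=(b,j)$ the prefactor is $\tfrac{2}{\lambda_a+\lambda_i}$ and $\tau(x_{ai},y_{ai}) = \tau^\flat\big(x_{ai}^\flat, y_{ai}^\flat\big)$ by the definition of $\tau^\flat$. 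This is exactly the block-diagonal expression in the statement, and regrouping over the Peirce blocks yields the compact form \eqref{eq:G-xi}.

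The computation is short, and the points deserving care are purely bookkeeping: one must make sure each Peirce block $\Jcal_{ai}$ appearing in the direct sum $T_\xi\ocal=\bigoplus_{a,i}\Jcal_{ai}^\flat$ (from \eqref{eq:orbit-xlambda}) is counted once, treating the index pair $\{a,i\}$ as unordered so that the prefactor $\tfrac{2}{\lambda_a+\lambda_i}$ is unambiguous under $\Jcal_{ai}=\Jcal_{ia}$; and one must check that $l_x$ is genuinely invertible on each block, which is precisely where \eqref{eq:form-regular} is used. The conceptual subtlety of whether $\Gcal_\xi$ depends on the chosen preimages $u,v$ is not an obstacle here, as independence of that choice is part of Proposition~\ref{prop:inner-product}; it remains only to select the eigenvector representatives above and evaluate.
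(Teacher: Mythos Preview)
Your proof is correct and follows essentially the same route as the paper's: both compute $l_{x_{ij}}^\ast(\xi)=\tfrac12(\lambda_i+\lambda_j)x_{ij}^\flat$ via \eqref{eq:l*dual}, feed Peirce elements into the defining relation \eqref{eq:inner-productA}, and then use associativity of $\tau$ together with $\tau$-orthogonality of the Peirce decomposition. The only cosmetic difference is that you normalize the inputs $u,v$ so that $l_u^\ast(\xi)=x_{ai}^\flat$ exactly, whereas the paper leaves the scalar factor $\tfrac12(\lambda_a+\lambda_i)$ on the tangent side and divides at the end.
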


\begin{proof}
For $x_{ij} \in \Jcal_{ij}$, we have
\[
l_{x_{ij}}^\ast(\xi) \stackrel{(\ref{eq:l*dual})}= \big(l_{x_{ij}}\xi^\#\big)^\flat = \frac12 (\lambda_i + \lambda_j) x_{ij}^\flat.
\]
Therefore, evaluating both sides of equation \eqref{eq:inner-productA} gives us
\begin{align*}
\Gcal_\xi(l_{x_{ai}}^\ast(\xi), l_{y_{bj}}^\ast(\xi)) & = \frac14 (\lambda_a + \lambda_i)(\lambda_b + \lambda_j) \Gcal_\xi\big(x_{ai}^\flat, y_{bj}^\flat\big)\\
\Gcal_\xi(l_{x_{ai}}^\ast(\xi), l_{y_{bj}}^\ast(\xi)) & = \xi(\{x_{ai}, y_{bj}\}) = \tau\big(\xi^\#, \{x_{ai}, y_{bj}\}\big)\\
& = \tau(\{\xi^\#, x_{ai}\}, y_{bj}) \stackrel{(\ref{eq:define-xi})} = \frac12 (\lambda_a + \lambda_i) \tau(x_{ai}, y_{bj}).
\end{align*}
Since both equations must be equal, \eqref{eq:G-xi} follows as the Peirce decomposition ${\Jcal =  \bigoplus_{ij} \Jcal_{ij}}$ is $\tau$-orthogonal.
\end{proof}

\begin{Remark}\quad
\begin{enumerate}\itemsep=0pt
\item[(1)]
Comparing the description of the regular points in $\ocal$ in Theorem \ref{thm:main} and equation \eqref{eq:G-xi}, it follows that $\Gcal_\xi$ has a pole of order $1$ on $\ocal \backslash \ocal^{\rm reg}_{\m_{\Jcal}}$.

\item[(2)]
As $\Gcal_\xi$ is positive or negative definite on $\Jcal_{aa}^\flat$, depending on the sign of $\lambda_a \neq 0$, it follows that $\Gcal_\xi$ is indefinite at any regular point of spectral signature $(n_+, n_-)$ with $n_+, n_- > 0$.

That is, $\Gcal$ on $\ocal$ is definite (and hence defines a Riemannian metric) iff $\ocal = \G(\Jcal) \cdot \xi$ is a regular orbit, iff $\ocal \subset \overline{\Omega}_\Jcal$ is contained in the closure of the cone of squares ($\Gcal > 0$) or $\ocal \subset -\overline{\Omega}_\Jcal$ ($\Gcal < 0$).

\item[(3)]
It is also evident from the description of regular points in Theorem \ref{thm:main} that for a non-regular orbit $\ocal_{n_+,n_-}$ with $n_+,n_- > 0$ the regular part $\left(\ocal_{n_+,n_-}\right)^{\rm reg}_{\m_{\Jcal}}$ is not path connected.

\item[(4)]
Note that the Riemannian metric $\Gcal$ on $\ocal_{n_+,0}$ (and, similarly, $-\Gcal$ on $\ocal_{0,n_-}$) is not complete.
Namely, for $t>0$, the curve
\[
\alpha(t) := t^2 (c_1 + \dots + c_{n_+})^\flat \in \ocal_{n_+,0}
\]
for a Jordan frame $(c_i)_{i=1}^r$ has constant speed with respect to $\Gcal$ because
\[
\Gcal_{\alpha(t)}(\dot \alpha, \dot \alpha) = \sum_{a=1}^{n_+} \frac1{t^2} \tau(2t c_a, 2t c_a) = 4 \sum_{a=1}^{n_+} \tau(c_a, c_a).
\]
However, $\alpha$ cannot be extended in $\ocal_{n_+,0}$ at $t = 0$.
\end{enumerate}
\end{Remark}

\subsection{Examples} \label{subsec: examples}

We shall now describe the metric $\Gcal$ for the standard examples of positive Jordan algebras.

\subsection{The Fisher--Rao metric for finite sample spaces}\label{Ex: F-R}

We regard $\Jcal := \R^n $ {as a positive Jordan algebra whose algebraic operations are defined in a~component-wise way.
Then, it is not difficult to see that $\Omega_\Jcal$ can be identified with the first orthant $\R^n_+ \subset \R^{n}\cong \Jcal^\star$.}
The metric $\Gcal_\xi$ at $\xi = (\xi_1, \dots, \xi_n) \in \Omega_\Jcal$ is given by
\[
\Gcal_\xi(u, v) = \sum_i \dfrac1{\xi_i} u_i v_i, \qquad u = (u_i)_{i=1}^n, v = (v_i)_{i=1}^n \in \R^n.
\]
{When interpreting $\Omega_\Jcal$ as the set of positive finite measures on $\mathcal{X}_{n} = \{1, \dots, n\}$, it is clear that~$\Gcal$ {is such its pullback to the submanifold of strictly positive probability distributions on~$\mathcal{X}_{n}$ (i.e., open interior of the unit simplex inside $\R^n_+$) coincides with the {\em Fisher--Rao metric tensor }which naturally occurs in classical information geometry \cite{Amari-1985,A-J-L-S-2017}.}
As partially noted in \cite{C-J-S-2020-02,C-J-S-2020}, this instance shows that we may look at the non-normalized Fisher--Rao metric tensor on $\Omega_\Jcal = \R^n_+$ as the analogue of the homogeneous symplectic form on co-adjoint orbits in the case of Lie algebras.}

\subsection[The Jordan algebras M\_n\^{}\{sa\}(K), K = R, C, bH]{The Jordan algebras $\boldsymbol{ M_n^{\rm sa}(\K)}$, $\boldsymbol{\K = \R, \C, \bH}$}

Let $\K$ denote either the real, complex or quaternionic numbers, and we define the Jordan algebra of self-adjoint matrices
\[
M_n^{\rm sa}(\K) := \{ A \in \K^{n \times n} \mid A = A^\ast\}, \qquad \{A, B\} := \dfrac12 (AB + BA).
\]
For convenience, we replace $\tau$ from equation \eqref{eq:innerprod} by the associative inner product
\[
\hat \tau(A, B) := \operatorname{Tr}(AB),
\]
so that $\tau$ and $\hat \tau$ only differ by the multiplicative constant $\frac 1n \dim_\R M_n^{\rm sa}(\K)$.

Let $E_{ij} \in \K^{n\times n}$ denote the matrix with a $1$ in the $(i,j)$-entry.
Then $\{E_{11}, \dots, E_{nn}\}$ is a~Jordan frame of $M_n^{\rm sa}(\K)$, and the remaining Pierce spaces with respect to this frame are given as
\[
(M_n^{\rm sa}(\K))_{ij} = \{ z E_{ij} + \bar z E_{ji} \mid z \in \K\}, \qquad i < j.
\]
For $\K = \R, \C$ and $\bH$, the automorphism group of $M_n^{\rm sa}(\K)$ is ${\rm SO}(n)$, ${\rm U}(n)$ and ${\rm Sp}(n)$, respectively, acting on $M_n^{\rm sa}(\K)$ by conjugation. Thus, in particular, each $A \in M_n^{\rm sa}(\K)$ is diagonalizable by an element in the automorphism group, so that the spectral coefficients are the eigenvalues of~$A$. Thus, by Proposition \ref{prop:metric-Jordan}, for $\xi = \lambda_a E_{aa}^\flat \in (M_n^{\rm sa}(\K))^\star$ the metric~$\Gcal_\xi$ reads
\[
\Gcal_\xi\big(z E_{ai}^\flat, w E_{bj}^\flat\big) = \begin{cases} \dfrac2{\lambda_a + \lambda_i} (z\bar w + w \bar z) & \text{if $(a,i) = (b,j)$,}\\[1mm] 0 & \text{else.} \end{cases}
\]
{Moreover, because of Proposition \ref{prop:inner-product}, it follows that $\Gcal$ is preserved by the automorphism group of $M_n^{\rm sa}(\K)$.}

As already mentioned in the introduction, and in accordance with the results put forward in~\cite{C-J-S-2020-02,C-J-S-2020}, an interesting link between Jordan algebras and quantum information geometry appears when $\mathbb{K}=\mathbb{C}$.
In this case, we may identify $M_n^{\rm sa}(\C)$ with the Jordan algebra of self-adjoint observables of a finite-level quantum system with Hilbert space $\mathcal{H}\cong\C^{n}$.
Then, if we focus on the $\m_{\Jcal}$-regular orbit $\Omega_{\Jcal}$ of faithful, non-normalized quantum states, which can be identified with the dual of the orbit of invertible positive matrices in $M_n^{\rm sa}(\C)$, the metric tensor $\Gcal$ is such that its pullback to the submanifold of faithful quantum states, determined by the condition $\operatorname{Tr}(A)=1$, coincides with the so-called Bures--Helstrom metric tensor \cite{B-Z-2006, Dittmann-1993, Dittmann-1995, Helstrom-1967, Helstrom-1968, Helstrom-1969, Helstrom-1976, Safranek-2017, Safranek-2018, S-A-G-P-2020, Uhlmann-1992}.
Analogously, if we focus on the $\m_{\Jcal}$-regular orbit through non-normalized pure states, which are identified with rank-one matrices in $M_n^{\rm sa}(\C)$, the metric tensor $\Gcal$ is such that its pullback to the submanifold of pure states, determined by the condition $\operatorname{Tr}(A)=1$, is a multiple of the Fubini--Study metric tensor, essentially because of its unitary invariance.
Accordingly, and in analogy with the Fisher--Rao metric tensor seen before, we may think of the non-normalized version of the Bures--Helstrom metric tensor and of the Fubini--Study metric tensor as the analogue of the Kostant--Kirillov--Souriau symplectic form in the case of the Jordan algebra~$M_n^{\rm sa}(\C)$.\looseness=1

\subsection[The spin-factor Jordan algebra Jspin(n)]{The spin-factor Jordan algebra $\boldsymbol{\Jspin(n)}$}

Denoting the standard inner product of $\R^n$ by $\langle \cdot, \cdot \rangle$, we let
\[
\Jspin(n) := \R^{n+1} = \R {\bf 1} \oplus \R^n, \qquad \{x,y\} := \langle x,y\rangle {\bf 1}, \qquad x,y \in \R^n,
\]
and where ${\bf 1}$ is the identity element of $\Jspin(n)$. An associative inner product is given by
\[
\hat \tau|_{\R^n} = \langle \cdot, \cdot \rangle, \qquad \hat \tau({\bf 1}, {\bf 1}) := 1, \qquad \hat \tau({\bf 1}, \R^n) = 0.
\]
The automorphism group is ${\rm SO}(n)$, acting on $\R^n$ and fixing ${\bf 1}$. Every Jordan frame is given by
\[
\left\{ \dfrac12({\bf 1} + e_0), \dfrac12 ({\bf 1} - e_0)\right\}
\]
for a fixed unit vector $e_0 \in \R^n$, and the Peirce space complementary to the Jordan frame is
\[
\Jspin(n)_{12} := e_0^\perp.
\]
The two spectral coefficients of an element $X = t {\bf 1} + x$ are
\[
\lambda_1 = \frac 12 (t+ \|x\|), \qquad \lambda_2 = \frac 12 (t- \|x\|),
\]
where $\|\cdot\|$ denotes the norm on $\R^n$ induced by $\langle \cdot, \cdot \rangle$. Therefore, $\xi \in \Jspin(n)^\star$ is regular iff~$0 \neq \lambda_1 + \lambda_2 = \hat \tau^\flat\big(\xi, {\bf 1}^\flat\big)$. If
\[
\xi = t_0 {\bf 1}^\flat + s_0 e_0^\flat \in \Jspin(n)^\star, \qquad t_0 \neq 0
\]
is regular, where $e_0 \in \R^n$ is a unit vector, then the tangent vectors $X_1, X_2 \in T_\xi \ocal$ are of the form
\[
X_i = t_i {\bf 1}^\flat + s_i e_0^\flat + x_i^\flat,
\]
where $x_i \in e_0^\perp$, and where $t_0 = \pm s_0 \Rightarrow t_i = \pm s_i$. The spectral coefficients of $\xi$ are $\lambda_1 = \frac12(t_0+s_0)$ and $\lambda_2 = \frac12(t_0-s_0)$, and
\[
X_i = (t_i+s_i) \frac12({\bf 1} + e_0)^\flat + (t_i-s_i) \frac12({\bf 1} - e_0)^\flat + x_i^\flat.
\]
Therefore,
\begin{align*}
\Gcal_\xi(X_1, X_2) = \dfrac 2{t_0 + s_0} (t_1 + s_1)(t_2 + s_2) + \dfrac 2{t_0 - s_0} (t_1 - s_1)(t_2 - s_2) + \dfrac2{t_0} \langle x_1, x_2 \rangle.
\end{align*}
This metric is positive definite if $t_0 \geq |s_0|$ and negative definite if $t_0 \leq -|s_0|$, as predicted by Proposition \ref{prop:metric-Jordan}.

\begin{Remark}
According to the classification given in \cite[Theorem V.3.7]{F-K-1994}, the first two classes of examples discussed above give a complete list of simple, positive Jordan algebras up to the {\em Albert algebra}, a $27$-dimensional simple Jordan algebra of rank $3$. Its automorphism group is $F_4$ and the structure algebra is of type $E_6$.

While it would be possible but elaborate to calculate the regular points and the inner product~$\Gcal$ on the tangent to the orbit at a regular point, our results in Theorem \ref{thm:main} and Proposition~\ref{prop:metric-Jordan}, allow understanding the structure without these explicit calculations.
\end{Remark}

\subsection{Non-simple, semisimple positive Jordan algebras}

By point (1) of Proposition \ref{prop:collect-positive-Jordan}, each positive, semisimple Jordan algebra admits a~decomposition~$\Jcal = \Jcal_1 \oplus \dots \oplus \Jcal_k$ into positive {\em simple }Jordan algebras, so that both the automorphism and the structure group of $\Jcal$ are the direct sum of the automorphism and structure group of the simple factors $\Jcal_i$, respectively.
Then, applying Theorems \ref{thm:orbits} and \ref{thm:main}, and Proposition \ref{prop:metric-Jordan}, it follows that the $\G(\Jcal)$-orbits are of the form
\[
\ocal^1_{n^1_+, n^1_-} \times \dots \times \ocal^k_{n^k_+, n^k_-} \subset \Jcal^\star = \Jcal_1^\star \oplus \dots \oplus \Jcal^\star_k,
\]
where $\ocal^i_{n^i_+, n^i_-} \subset \Jcal^\star_i$ are $\G(\Jcal_i)$-orbits. In particular, such an orbit is regular iff $n^i_+ n^i_- = 0$ for all~$i$, and the metric $\Gcal$ on the regular part of this orbit is given by (\ref{eq:G-xi}).

\section{Discussion} \label{sec: conclusions}

As mentioned in Section~\ref{sec: introduction}, the mathematics of Jordan algebras has been intensively studied, and we want to discuss here where our investigation place itself in this context.

First of all, it is clear that our construction is highly influenced and inspired by Kirillov's theory, and the relation between Jordan algebras and Kirillov's theory has been already investigated in the literature \cite{H-N-O-1995,H-N-O-1996,H-N-O-1996-2}.
However, the approaches and constructions already available are different from ours, both conceptually and in terms of mathematical structures.
For instance, in the works mentioned above, the emphasis is on the study of nilpotent coadjoint orbits of convex types, while we do not study the coadjoint orbits of the structure group $\G(\Jcal)$, rather we try to develop an analogue of coadjoint orbits for a Jordan algebra.
In order to do so, it turns out we need to rely on the structure group $\G(\Jcal)$ and on (some of) its homogeneous spaces.
A clear departure point from ordinary Kirillov theory is then the fact that the orbits we find do not possess a natural symplectic structure, but rather a pseudo-Riemannian one.
The shift from an antisymmetric tensor to a symmetric one is clearly related to the fact that we develop Kirillov's theory for Jordan algebras, which have a symmetric product.
Moreover, as commented in Sections~\ref{sec: introduction} and~\ref{subsec: examples}, a direct output of our investigation is the fact that some relevant Riemannian structures that naturally arise in the context of Classical and quantum information geometry (namely, the Fisher--Rao metric tensor and the Bures--Helstrom metric tensor) can be thought of as the Jordan-algebraic analogue of the natural homogeneous symplectic form on the co-adjoint orbits of a Lie group.

Other important avenues of research around the mathematics of Jordan algebras concern the~relation between symmetric spaces/cones/domains and Jordan algebras, as well as the differential geometric structures related with the algebraic structure of Jordan algebras \cite{Bertram-2000,B-N-2005,Chu-2012,F-K-1994,Koecher-1999,Upmeier-1985}.

In all these investigations, the geometrical objects are subsets of a given Jordan algebra $\Jcal$, while we consider positive linear functionals on $\Jcal$.
This change of perspective is dictated by the role Jordan algebras play in information geometry, where the relevant objects live in the dual space of $\Jcal$ (e.g., classical probability distributions, quantum states).
It is true that it is possible to identify $\Jcal$ with its dual in finite dimensions, and we exploit this technicality in our proofs, but the focus of our approach is conceptually different from the previous ones, and this difference is particularly relevant with respect to possible generalizations to infinite dimensions (a task that we plan to address in the future).

Again in relation with the above-mentioned references, the symmetric cones in real Hilbert spaces that turn out to be in one-to-one correspondence with formally real (Euclidean) Jordan algebras, in the sense that every such symmetric cone can be realized as the open cone $\Omega_\Jcal$ of invertible positive elements of a suitable Jordan algebra $\Jcal$, are given from the onset.
From our point of view, the open cone $\Omega_\Jcal$ is not an {\itshape a priori} datum of the problem, but it appears because it is diffeomorphic to an integral manifold of the Jordan distribution $\hh^\Jcal$.
Therefore, $\Omega_\Jcal$ is a byproduct of our investigation of the integrability properties of the Jordan distribution canonically associated with the Jordan algebra product.
Moreover, the symmetric cones are also naturally endowed with a Riemannian structure $g$, which is invariant under the action of the symmetry group $\G(\Jcal)$ of the cone itself, while the Riemannian structure we obtain is invariant only under the subgroup of $\G(\Jcal)$ composed by automorphisms of $\Jcal$.
This instance follows from the fact that we obtain the Riemannian structure by implementing a symmetric analogue of Kirillov theory, and not by symmetry considerations.
In particular, in the case of the real associative Jordan algebra $\mathbb{R}^{n}$, the $\G(\Jcal)$-invariant Riemannian metric tensor $g$ on $\Omega_\Jcal=\mathbb{R}^{n}_{+}$ can be written as \cite[Theorem~2.3.19]{Chu-2012}
\[
g=\sum_{j=1}^{n}\frac{\mathrm{d}p^{j} \otimes \mathrm{d}p^{j}}{(p^{j})^{2}},
\]
where $\{p^{j}\}_{j=1,\dots,n}$ is a Cartesian coordinate system adapted to $\Omega_\Jcal$ in the sense that elements in~$\Omega_\Jcal$ have strictly positive values of the coordinates.
This Riemannian metric tensor is different from the Fisher--Rao metric tensor appearing in classical information geometry \cite{A-J-L-S-2017}.
However, as shown in Section~\ref{subsec: examples} (in accordance with \cite{C-J-S-2020}), the Riemannian structure emerging from our construction is precisely the Fisher--Rao metric tensor.
Moreover, the $\G(\Jcal)$ invariance of~$g$ implies it is invariant with respect to dilations also when $\Jcal = \mathcal{B}_{\rm sa}(\mathbb{C}^{n})\cong M_{n}^{\rm sa}(\mathbb{C})$, so that $g$ can not be the Bures--Helstrom metric tensor appearing in quantum information geometry \cite{Dittmann-1993}, while, again referring to Section~\ref{subsec: examples}, the Riemannian metric tensor we obtain reduces to the Bures--Helstrom metric tensor when $\Jcal = \mathcal{B}_{\rm sa}(\mathbb{C}^{n})\cong M_{n}^{\rm sa}(\mathbb{C})$.

Another relevant point to stress is that our construction relies entirely on the algebraic Jordan product.
This detail contributes to further differentiate our work from previous ones, where part of the geometrical structures associated with Jordan algebras necessarily involve the so-called \emph{Jordan triple product}, a trilinear map that exists in every Jordan algebra (and also in more general objects known as \emph{Jordan triple systems}).
For instance, the $\G(\Jcal)$-invariant Riemannian metric tensor discussed above is defined in terms of the Jordan triple product of~$\Jcal$ \cite[Theorem~2.3.19]{Chu-2012}, while our Riemannian metric tensor only requires the Jordan product.
Also, the Jordan triple product induces a natural Lie triple system that leads to a curvature-like tensor~\cite{Bertram-2000}, and again, this differential geometric object makes use of the Jordan product only indirectly, highlighting the difference with our approach.
At this point, it is worth noting that we plan to address the role of the Jordan triple product and its associated Lie triple system in information geometry because we believe they are connected with the so-called Amari--Cencov tensor in the classical case, and with a non-symmetric generalization of that tensor in the quantum case.\looseness=1

Therefore, while and perhaps because our approach is partly extrinsically motivated by classical and quantum information geometry, and partly intrinsically by developing an orbit method-like theory, our constructions and results are different from previous ones and, taken together, they yield a fuller picture of the structure of Jordan algebras.

\subsection*{Acknowledgements}

F.M.C.\ acknowledges that this work has been supported by the Madrid Government (Comunidad de Madrid-Spain) under the Multiannual Agreement with UC3M in the line of ``Research Funds for Beatriz Galindo Fellowships'' (C\&QIG-BG-CM-UC3M), and in the context of the V PRICIT (Regional Program of Research and Technological Innovation).
He also wants to thank the incredible support of the Max Planck Institute for the Mathematics in the Sciences in Leipzig, where he was formerly employed when this work was initially started and developed.
L.S.~acknowledges partial support by grant SCHW893/5-1 of the
Deutsche Forschungsgemeinschaft, and also expresses his gratitude for the hospitality of the Max Planck Institute for the Mathematics in the Sciences in Leipzig during numerous visits.
This publication is based upon work from COST Action CaLISTA CA21109 supported by COST (European Cooperation in Science and Technology, www.cost.eu).
We also thank the anonymous referees for their valuable comments and suggestions which enabled us to significantly improve this manuscript.

\pdfbookmark[1]{References}{ref}
\LastPageEnding


\begin{thebibliography}{99}
\footnotesize\itemsep=0pt

\bibitem{A-S-2001}
Alfsen E.M., Shultz F.W., State spaces of operator algebras. {B}asic theory,
 orientations, and {$C^*$}-products, \textit{Math. Theory Appl.}, \href{https://doi.org/10.1007/978-1-4612-0147-2}{Birkh\"auser}, Boston,
 MA, 2001.

\bibitem{Amari-1985}
Amari S.-I., Differential-geometrical methods in statistics, \textit{Lect. Notes
 Stat.}, Vol.~28, \href{https://doi.org/10.1007/978-1-4612-5056-2}{Springer}, Berlin, 1985.

\bibitem{Amari-2016}
Amari S.-I., Information geometry and its applications, \textit{Appl. Math.
 Sci.}, Vol. 194, \href{https://doi.org/10.1007/978-4-431-55978-8}{Springer}, Tokyo, 2016.

\bibitem{A-N-2000}
Amari S.-I., Nagaoka H., Methods of information geometry, \textit{Transl. Math.
 Monogr.}, Vol. 191, \href{https://doi.org/10.1090/mmono/191}{American Mathematical Society}, Providence, RI, 2000.

\bibitem{A-J-L-S-2015}
Ay N., Jost J., L\^e H.V., Schwachh\"ofer L., Information geometry and
 sufficient statistics, \href{https://doi.org/10.1007/s00440-014-0574-8}{\textit{Probab. Theory Related Fields}} \textbf{162}
 (2015), 327--364, \href{https://arxiv.org/abs/1207.6736}{arXiv:1207.6736}.

\bibitem{A-J-L-S-2017}
Ay N., Jost J., L\^e H.V., Schwachh\"ofer L., Information geometry,
 \textit{Ergeb. Math. Grenzgeb.~(3)}, Vol.~64, \href{https://doi.org/10.1007/978-3-319-56478-4}{Springer}, Cham, 2017.

\bibitem{Baez-2022}
Baez J.C., Getting to the bottom of Noether's theorem, in The Philosophy and
 Physics of Noether's Theorems: a Centenary Volume, \href{https://doi.org/10.1017/9781108665445.005}{Cambridge University Press}, Cambridge, 2022, 66--99,
 \href{https://arxiv.org/abs/2006.14741}{arXiv:2006.14741}.

\bibitem{B-B-M-2016}
Bauer M., Bruveris M., Michor P.W., Uniqueness of the {F}isher--{R}ao metric on
 the space of smooth densities, \href{https://doi.org/10.1112/blms/bdw020}{\textit{Bull. Lond. Math. Soc.}} \textbf{48}
 (2016), 499--506, \href{https://arxiv.org/abs/1411.5577}{arXiv:1411.5577}.

\bibitem{B-Z-2006}
Bengtsson I., Zyczkowski K., Geometry of quantum states: an introduction to
 quantum entanglement, \href{https://doi.org/10.1017/cbo9780511535048}{Cambridge University Press}, Cambridge, 2006.

\bibitem{Berthier-2020}
Berthier M., Geometry of color perception. {P}art~2: perceived colors from real
 quantum states and {H}ering's rebit, \href{https://doi.org/10.1186/s13408-020-00092-x}{\textit{J.~Math. Neurosci.}} \textbf{10}
 (2020), 14, 25~pages, \href{https://arxiv.org/abs/hal-02342456}{arXiv:hal-02342456}.

\bibitem{B-P-P-2022}
Berthier M., Prencipe N., Provenzi E., A quantum information-based refoundation
 of color perception concepts, \href{https://doi.org/10.1137/22M1476071}{\textit{SIAM J.~Imaging Sci.}} \textbf{15}
 (2022), 1944--1976.

\bibitem{B-P-2022}
Berthier M., Provenzi E., Quantum measurement and colour perception: theory and
 applications, \href{https://doi.org/10.1098/rspa.2021.0508}{\textit{Proc.~A.}} \textbf{478} (2022), 20210508, 25~pages,
 \href{https://arxiv.org/abs/hal-03268152}{arXiv:hal-03268152}.

\bibitem{Bertram-2000}
Bertram W., The geometry of {J}ordan and {L}ie structures, \textit{Lecture
 Notes in Math.}, Vol. 1754, \href{https://doi.org/10.1007/b76884}{Springer}, Berlin, 2000.

\bibitem{B-N-2005}
Bertram W., Neeb K.-H., Projective completions of Jordan pairs, {P}art {II}:
 {M}anifold structures and symmetric spaces, \href{https://doi.org/10.1007/s10711-004-4197-6}{\textit{Geom. Dedicata}}
 \textbf{112} (2005), 73--113, \href{https://arxiv.org/abs/math.GR/0401236}{arXiv:math.GR/0401236}.

\bibitem{Bures-1969}
Bures D., An extension of {K}akutani's theorem on infinite product measures to
 the tensor product of semifinite {$w^{\ast}$}-algebras, \href{https://doi.org/10.2307/1995012}{\textit{Trans. Amer.
 Math. Soc.}} \textbf{135} (1969), 199--212.

\bibitem{Cencov-1982}
\v{C}encov N.N., Statistical decision rules and optimal inference,
 \textit{Transl. Math. Monogr.}, Vol.~53, \href{https://doi.org/10.1090/mmono/053}{American Mathematical Society},
 Providence, RI, 1982.

\bibitem{C-C-I-M-V-2019}
Chru\'sci\'nski D., Ciaglia F.M., Ibort A., Marmo G., Ventriglia F., Stratified
 manifold of quantum states, actions of the complex special linear group,
 \href{https://doi.org/10.1016/j.aop.2018.11.015}{\textit{Ann. Physics}} \textbf{400} (2019), 221--245, \href{https://arxiv.org/abs/1811.07406}{arXiv:1811.07406}.

\bibitem{Chu-2012}
Chu C.-H., Jordan structures in geometry and analysis, \textit{Cambridge Tracts
 in Math.}, Vol. 190, \href{https://doi.org/10.1017/CBO9781139060165}{Cambridge University Press}, Cambridge, 2012.

\bibitem{Chu-2017}
Chu C.-H., Infinite dimensional {J}ordan algebras and symmetric cones,
 \href{https://doi.org/10.1016/j.jalgebra.2017.08.017}{\textit{J.~Algebra}} \textbf{491} (2017), 357--371, \href{https://arxiv.org/abs/1707.03610}{arXiv:1707.03610}.

\bibitem{C-DC-I-L-M-2017}
Ciaglia F.M., Cosmo F.D., Ibort A., Laudato M., Marmo G., Dynamical vector
 fields on the manifold of quantum states, \href{https://doi.org/10.1142/S1230161217400030}{\textit{Open Syst. Inf. Dyn.}}
 \textbf{24} (2017), 1740003, 38~pages, \href{https://arxiv.org/abs/1707.00293}{arXiv:1707.00293}.

\bibitem{C-DC-L-M-M-V-V-2018}
Ciaglia F.M., Di~Cosmo F., Laudato M., Marmo G., Mele F.M., Ventriglia F.,
 Vitale P., A pedagogical intrinsic approach to relative entropies as
 potential functions of quantum metrics: the {$q$}-{$z$} family, \href{https://doi.org/10.1016/j.aop.2018.05.015}{\textit{Ann.
 Physics}} \textbf{395} (2018), 238--274, \href{https://arxiv.org/abs/1711.09769}{arXiv:1711.09769}.

\bibitem{C-J-S-2020-02}
Ciaglia F.M., Jost J., Schwachh\"ofer L., Differential geometric aspects of
 parametric estimation theory for states on finite-dimensional
 {$C^*$}-algebras, \href{https://doi.org/10.3390/e22111332}{\textit{Entropy}} \textbf{22} (2020), 1332, 30~pages,
 \href{https://arxiv.org/abs/2010.14394}{arXiv:2010.14394}.

\bibitem{C-J-S-2020}
Ciaglia F.M., Jost J., Schwachh\"ofer L., From the {J}ordan product to
 {R}iemannian geometries on classical and quantum states, \href{https://doi.org/10.3390/e22060637}{\textit{Entropy}}
 \textbf{22} (2020), 637, 27~pages, \href{https://arxiv.org/abs/2005.02023}{arXiv:2005.02023}.

\bibitem{C-DN-J-S-2023}
Ciaglia F.M., Nocera F.D., Jost J., Schwachh\"ofer L., Parametric models and
 information geometry on {$W^{*}$}-algebras, \href{https://doi.org/10.1007/s41884-022-00094-6}{\textit{Inf. Geom.}} \textbf{5}
 (2023), 1--26, \href{https://arxiv.org/abs/2207.09396}{arXiv:2207.09396}.


\bibitem{Dittmann-1993}
Dittmann J., On the {R}iemannian geometry of finite-dimensional mixed states,
 \textit{Sem. Sophus Lie} \textbf{3} (1993), 73--87.

\bibitem{Dittmann-1995}
Dittmann J., On the {R}iemannian metric on the space of density matrices,
 \href{https://doi.org/10.1016/0034-4877(96)83627-5}{\textit{Rep. Math. Phys.}} \textbf{36} (1995), 309--315.

\bibitem{F-F-M-P-2014}
Facchi P., Ferro L., Marmo G., Pascazio S., Defining quantumness via the
 {J}ordan product, \href{https://doi.org/10.1088/1751-8113/47/3/035301}{\textit{J.~Phys.~A}} \textbf{47} (2014), 035301, 9~pages,
 \href{https://arxiv.org/abs/1309.4635}{arXiv:1309.4635}.

\bibitem{F-K-M-M-S-V-2010}
Facchi P., Kulkarni R., Man'ko V.I., Marmo G., Sudarshan E.C.G., Ventriglia F.,
 Classical and quantum {F}isher information in the geometrical formulation of
 quantum mechanics, \href{https://doi.org/10.1016/j.physleta.2010.10.005}{\textit{Phys. Lett.~A}} \textbf{374} (2010), 4801--4803,
 \href{https://arxiv.org/abs/1009.5219}{arXiv:1009.5219}.

\bibitem{F-F-I-M-2013}
Falceto F., Ferro L., Ibort A., Marmo G., Reduction of {L}ie--{J}ordan {B}anach
 algebras and quantum states, \href{https://doi.org/10.1088/1751-8113/46/1/015201}{\textit{J.~Phys.~A}} \textbf{46} (2013), 015201,
 14~pages, \href{https://arxiv.org/abs/1202.3969}{arXiv:1202.3969}.

\bibitem{F-K-1994}
Faraut J., Kor\'anyi A., Analysis on symmetric cones, \textit{Oxford Math. Monogr.}, The
 Clarendon Press, Oxford, 1994.

\bibitem{Fisher-1922}
Fisher R.A., On the mathematical foundations of theoretical statistics,
 \href{https://doi.org/10.1098/rsta.1922.0009}{\textit{Philos. Trans. Roy. Soc.~A}} \textbf{222} (1922), 309--368.

\bibitem{Fuchs-1996}
Fuchs C.A., Distinguishability and accessible information in quantum theory,
 Ph.D.~Thesis, {U}niversite de {M}ontreal, 1996, \href{https://arxiv.org/abs/quant-ph/9601020}{arXiv:quant-ph/9601020}.

\bibitem{Fujiwara-2023}
Fujiwara A., Hommage to {C}hentsov's theorem, \href{https://doi.org/10.1007/s41884-022-00077-7}{\textit{Inf. Geom.}}, {t}o appear.

\bibitem{Hasegawa-1995}
Hasegawa H., Non-commutative extension of the information geometry, in Quantum
 Communications and Measurement ({N}ottingham, 1994), \href{https://doi.org/10.1007/978-1-4899-1391-331}{Plenum}, New York, 1995,
 327--337.

\bibitem{H-P-1997}
Hasegawa H., Petz D., Non-commutative extension of information geometry~{II},
 in Quantum Communication, Computing, and Measurement, \href{https://doi.org/10.1007/978-1-4615-5923-8_12}{Springer}, New York,
 1997, 109--118.

\bibitem{Helstrom-1967}
Helstrom C.W., Minimum mean-squared error of estimates in quantum statistics,
 \href{https://doi.org/10.1016/0375-9601(67)90366-0}{\textit{Phys. Lett.~A}} \textbf{25} (1967), 101--102.

\bibitem{Helstrom-1968}
Helstrom C.W., The minimum variance of estimates in quantum signal detection,
 \href{https://doi.org/10.1109/tit.1968.1054108}{\textit{IEEE Trans. Inform. Theory}} \textbf{14} (1968), 234--242.

\bibitem{Helstrom-1969}
Helstrom C.W., Quantum detection and estimation theory, \href{https://doi.org/10.1007/BF01007479}{\textit{J.~Stat. Phys.}}
 \textbf{1} (1969), 231--252.

\bibitem{Helstrom-1976}
Helstrom C.W., Quantum detection and estimation theory, Academic Press, New
 York, 1976.

\bibitem{H-N-O-1995}
Hilgert J., Neeb K.-H., {\O}rsted B., The geometry of nilpotent coadjoint orbits
 of convex type in {H}ermitian {L}ie algebras, \textit{J.~Lie Theory}
 \textbf{4} (1994), 185--235.

\bibitem{H-N-O-1996}
Hilgert J., Neeb K.-H., {\O}rsted B., Conal {H}eisenberg algebras and associated
 {H}ilbert spaces, \href{https://doi.org/10.1515/crll.1996.474.67}{\textit{J.~Reine Angew. Math.}} \textbf{474} (1996),
 67--112.

\bibitem{H-N-O-1996-2}
Hilgert J., Neeb K.-H., {\O}rsted B., Unitary highest weight representations via
 the orbit method. {I}.~{T}he scalar case, \href{https://doi.org/10.1007/BF00116520}{\textit{Acta Appl. Math.}} \textbf{44} (1996), 151--184.

\bibitem{Iordanescu-2011}
Iord\u{a}nescu R., Jordan structures in geometry and physics, Editura Academiei
 Rom\^ane, Bucharest, 2003, \href{https://arxiv.org/abs/1106.4415}{arXiv:1106.4415}.

\bibitem{Jencova-2003}
Jen\v{c}ov\'a A., Affine connections, duality and divergences for a von
 {N}eumann algebra, \href{https://arxiv.org/abs/math-ph/0311004}{arXiv:math-ph/0311004}.

\bibitem{Jencova-2006}
Jen\v{c}ov\'a A., A construction of a nonparametric quantum information
 manifold, \href{https://doi.org/10.1016/j.jfa.2006.02.007}{\textit{J.~Funct. Anal.}} \textbf{239} (2006), 1--20,
 \href{https://arxiv.org/abs/math-ph/0511065}{arXiv:math-ph/0511065}.

\bibitem{J-vN-W-1934}
Jordan P., von Neumann J., Wigner E.P., On an algebraic generalization of the
 quantum mechanical formalism, \href{https://doi.org/10.1007/978-3-662-02781-321}{\textit{Ann. of Math.}} \textbf{35} (1934),
 29--64.

\bibitem{Kakutani-1948}
Kakutani S., On equivalence of infinite product measures, \href{https://doi.org/10.2307/1969123}{\textit{Ann. of
 Math.}} \textbf{49} (1948), 214--224.

\bibitem{Kirillov-1962}
Kirillov A.A., Unitary representations of nilpotent {L}ie groups,
 \href{https://doi.org/10.1070/RM1962v017n04ABEH004118}{\textit{Russian Math. Surveys}} \textbf{17} (1962), 53--104.

\bibitem{Kirillov-1976}
Kirillov A.A., Elements of the theory of representations, \textit{Grundlehren Math. Wiss.}, Vol.~220, \href{https://doi.org/10.1007/978-3-642-66243-0}{Springer}, Berlin, 1976.

\bibitem{Kirillov-2001}
Kirillov A.A., Geometric quantization, in Dynamical {S}ystems,~{IV},
 \textit{Encyclopaedia Math. Sci.}, Vol.~4, \href{https://doi.org/10.1007/978-3-662-06791-8_2}{Springer}, Berlin, 2001, 139--176,
 \href{https://arxiv.org/abs/1801.02307}{arXiv:1801.02307}.

\bibitem{Kirillov-2004}
Kirillov A.A., Lectures on the orbit method, \textit{Grad. Stud. Math.},
 Vol.~64, \href{https://doi.org/10.1090/gsm/064}{American Mathematical Society}, Providence, RI, 2004.

\bibitem{Koecher-1999}
Koecher M., The {M}innesota notes on {J}ordan algebras and their applications,
 \textit{Lecture Notes in Math.}, Vol.~1710, \href{https://doi.org/10.1007/BFb0096285}{Springer}, Berlin, 1999.

\bibitem{Kostant-1970}
Kostant B., Quantization and unitary representations. {I}.~{P}requantization,
 in Lectures in {M}odern {A}nalysis and {A}pplications,~{III}, \textit{Lecture
 Notes in Math.}, Vol. 170, \href{https://doi.org/10.1007/BFb0079068}{Springer}, Berlin, 1970, 87--208.

\bibitem{Kostecki-2011}
Kostecki R.P., Quantum theory as inductive inference, \href{https://doi.org/10.1063/1.3573636}{\textit{J.~Math. Phys.}}
 \textbf{1305} (2011), 33--40, \href{https://arxiv.org/abs/1009.2423}{arXiv:1009.2423}.

\bibitem{L-L-2022}
Larotonda G., Luna J., On the structure group of an infinite dimensional
 JB-algebra, \href{https://doi.org/10.1016/j.jalgebra.2023.02.003}{\textit{J.~Algebra}} \textbf{622} (2023), 366--403,
 \href{https://arxiv.org/abs/2206.05320}{arXiv:2206.05320}.

\bibitem{L-R-1999}
Lesniewski A., Ruskai M.B., Monotone {R}iemannian metrics and relative entropy
 on noncommutative probability spaces, \href{https://doi.org/10.1063/1.533053}{\textit{J.~Math. Phys.}} \textbf{40}
 (1999), 5702--5724, \href{https://arxiv.org/abs/math-ph/9808016}{arXiv:math-ph/9808016}.

\bibitem{Lichnerowicz-1977}
Lichnerowicz A., Les vari\'et\'es de {P}oisson et leurs alg\`ebres de {L}ie
 associ\'ees, \href{https://doi.org/10.4310/jdg/1214433987}{\textit{J.~Differential Geometry}} \textbf{12} (1977), 253--300.

\bibitem{L-Y-L-W-2020}
Liu J., Yuan H., Lu X.-M., Wang X., Quantum {F}isher information matrix and
 multiparameter estimation, \href{https://doi.org/10.1088/1751-8121/ab5d4d}{\textit{J.~Phys.~A}} \textbf{53} (2020), 023001,
 68~pages, \href{https://arxiv.org/abs/1907.08037}{arXiv:1907.08037}.

\bibitem{Mahalanobis-1936}
Mahalanobis P.C., On the generalized distance in statistics, \textit{Proc.
 Natl. Inst. Sci. India} \textbf{2} (1936), 49--55.

\bibitem{M-M-V-V-2017}
Man'ko V.I., Marmo G., Ventriglia F., Vitale P., Metric on the space of quantum
 states from relative entropy. {T}omographic reconstruction,
 \href{https://doi.org/10.1088/1751-8121/aa7d7d}{\textit{J.~Phys.~A}} \textbf{50} (2017), 335302, 29~pages,
 \href{https://arxiv.org/abs/1612.07986}{arXiv:1612.07986}.

\bibitem{C-M-1991}
Morozowa E.A., Cencov N.N., Markov invariant geometry on state manifolds,
 \href{https://doi.org/10.1007/BF01095975}{\textit{J.~Sov. Math.}} \textbf{56} (1991), 2648--2669.

\bibitem{Niestegge-2020}
Niestegge G., A simple and quantum-mechanically motivated characterization of
 the formally real {J}ordan algebras, \href{https://doi.org/10.1098/rspa.2019.0604}{\textit{Proc.~A.}} \textbf{476} (2020),
 20190604, 14~pages, \href{https://arxiv.org/abs/2019.0604}{arXiv:2019.0604}.

\bibitem{Paris-2009}
Paris M.G.A., Quantum estimation for quantum technology, \href{https://doi.org/10.1142/S0219749909004839}{\textit{Int.~J.
 Quantum Inf.}} \textbf{7} (2009), 125--137, \href{https://arxiv.org/abs/0804.2981}{arXiv:0804.2981}.

\bibitem{Petz-1996}
Petz D., Monotone metrics on matrix spaces, \href{https://doi.org/10.1016/0024-3795(94)00211-8}{\textit{Linear Algebra Appl.}}
 \textbf{244} (1996), 81--96.

\bibitem{Provenzi-2020}
Provenzi E., Geometry of color perception. {P}art 1: structures and metrics of
 a homogeneous color space, \href{https://doi.org/10.1186/s13408-020-00084-x}{\textit{J.~Math. Neurosci.}} \textbf{10} (2020), 7,
 19~pages.

\bibitem{Rao-1945}
Rao C.R., Information and accuracy attainable in the estimation of statistical
 parameters, in Bulletin of the Calcutta Mathematical Society,
 \textit{Springer Ser. Statist.}, Vol.~37, \href{https://doi.org/10.1007/978-1-4612-0919-5_16}{Springer}, Berlin, 1992,
 235--247.

\bibitem{Resnikoff-1974}
Resnikoff H.L., Differential geometry and color perception, \href{https://doi.org/10.1007/BF00275798}{\textit{J.~Math.
 Biol.}} \textbf{1} (1974), 97--131.

\bibitem{Safranek-2017}
\v{S}afr\'anek D., Discontinuities of the quantum Fisher information and the
 Bures metric, \href{https://doi.org/10.1103/PhysRevA.95.052320}{\textit{Phys. Rev.~A}} \textbf{95} (2017), 052320, 13~pages,
 \href{https://arxiv.org/abs/1612.04581}{arXiv:1612.04581}.

\bibitem{Safranek-2018}
\v{S}afr\'anek D., Simple expression for the quantum Fisher information matrix,
 \href{https://doi.org/10.1103/PhysRevA.97.042322}{\textit{Phys. Rev.~A}} \textbf{97} (2018), 042322, 6~pages,
 \href{https://arxiv.org/abs/1801.00945}{arXiv:1801.00945}.

\bibitem{S-A-G-P-2020}
Seveso L., Albarelli F., Genoni M.G., Paris M.G.A., On the discontinuity of the
 quantum {F}isher information for quantum statistical models with parameter
 dependent rank, \href{https://doi.org/10.1088/1751-8121/ab599b}{\textit{J.~Phys.~A}} \textbf{53} (2020), 02LT01, 13~pages,
 \href{https://arxiv.org/abs/1906.06185}{arXiv:1906.06185}.

\bibitem{Souriau-1970}
Souriau J.-M., Structure des syst\`emes dynamiques, Dunod, Paris, 1970.

\bibitem{Sussmann-1973}
Sussmann H.J., Orbits of families of vector fields and integrability of
 distributions, \href{https://doi.org/10.2307/1996660}{\textit{Trans. Amer. Math. Soc.}} \textbf{180} (1973),
 171--188.

\bibitem{S-Y-H-2020}
Suzuki J., Yang Y., Hayashi M., Quantum state estimation with nuisance
 parameters, \href{https://doi.org/10.1088/1751-8121/ab8b78}{\textit{J.~Phys.~A}} \textbf{53} (2020), 453001, 61~pages,
 \href{https://arxiv.org/abs/1911.02790}{arXiv:1911.02790}.

\bibitem{T-A-2014}
T\'oth G., Apellaniz I., Quantum metrology from a quantum information science
 perspective, \href{https://doi.org/10.1088/1751-8113/47/42/424006}{\textit{J.~Phys.~A}} \textbf{47} (2014), 424006, 39~pages,
 \href{https://arxiv.org/abs/1405.487}{arXiv:1405.4878}.

\bibitem{Tulczyjew-1974}
Tulczyjew W.M., Poisson brackets and canonical manifolds, \textit{Bull. Acad.
 Polon. Sci. S\'er. Sci. Math. Astronom. Phys.} \textbf{22} (1974), 931--935.

\bibitem{Uhlmann-1976}
Uhlmann A., The ``transition probability'' in the state space of a
 {$^*$}-algebra, \href{https://doi.org/10.1016/0034-4877(76)90060-4}{\textit{Rep. Math. Phys.}} \textbf{9} (1976), 273--279.

\bibitem{Uhlmann-1992}
Uhlmann A., The metric of bures and the geometric phase, in Groups and Related
 Topics, \textit{Math. Phys. Stud.}, Vol.~13, \href{https://doi.org/10.1007/978-94-011-2801-8_23}{Springer}, Dordrecht, 1992,
 267--274.

\bibitem{Upmeier-1985}
Upmeier H., Symmetric {B}anach manifolds and {J}ordan {$C^\ast$}-algebras,
 \textit{North-Holland Math. Stud.}, Vol. 104, North-Holland Publishing Co.,
 Amsterdam, 1985.

\bibitem{W-vdW-2022}
Westerbaan B., van~de Wetering J., A computer scientist's reconstruction of
 quantum theory, \href{https://doi.org/10.1088/1751-8121/ac8459}{\textit{J.~Phys.~A}} \textbf{55} (2022), 384002, 52~pages,
 \href{https://arxiv.org/abs/2109.10707}{arXiv:2109.10707}.

\bibitem{Wootters-1981}
Wootters W.K., Statistical distance and {H}ilbert space, \href{https://doi.org/10.1103/PhysRevD.23.357}{\textit{Phys. Rev.~D}}
 \textbf{23} (1981), 357--362.

\end{thebibliography}
\end{document}